\definecolor{darkred}{rgb}{0.75,0,0.3}
\newcommand\AND{\quad\text{and}\quad}
\newcommand\as{a}
\newcommand\Aut{\operatorname{\sf Aut}}
\newcommand\bigast{\text{\Large $\ast$}}
\DeclareMathOperator*{\freeprod}{\bigast}
\newcommand\C{\mathbb C}
\newcommand\de{\delta}
\newcommand\Ex{\mathsf{E}}
\newcommand\hor{\mathfrak{h}}
\newcommand\la{\lambda}
\newcommand\mm{\mathsf m}
\newcommand\N{\mathbb N}
\newcommand\Prob{\mathsf{Pr}}
\newcommand\R{\mathbb R}
\newcommand\res{\mathsf{res}}
\newcommand\spec{\mathsf{spec}}
\newcommand\sr{m}
\newcommand\Ss{\N}
\newcommand\T{\mathbb T}
\newcommand\uno{\mathbf{1}}
\newcommand\wh{\widehat}
\newcommand{\massimo}[1]{{\leavevmode\color{red}{#1}}}
\newcommand\dist{\mathrm{dist}}
\numberwithin{equation}{section}
\newtheoremstyle{mythm}
  {9pt}
  {9pt}
  {\itshape}
  {0pt}
  {\bfseries}
  {}
  { }
  {\thmnumber{(#2)}\thmname{ #1}\thmnote{ #3}}
\newtheoremstyle{mydef}
  {9pt}
  {9pt}
  {\normalfont}
  {0pt}
  {\bfseries}
  {}
  { }
  {\thmnumber{(#2)}\thmname{ #1}\thmnote{ #3}}
\theoremstyle{mythm}
\newtheorem{thm}[equation]{Theorem.}
\newtheorem{pro}[equation]{Proposition.}
\newtheorem{lem}[equation]{Lemma.}
\newtheorem{cor}[equation]{Corollary.}
\theoremstyle{mydef}
\newtheorem{rmk}[equation]{Remark.}
\newtheorem{rmks}[equation]{Remarks.}
\newtheorem{rmksan}[equation]{Analytic continuation.}
\newtheorem{rmksig}[equation]{Remarks on $\sigma$-additivity.}
\newtheorem{rmkgp}[equation]{General transitive group actions.}
\begin{document}$\,$ \vspace{-1truecm}
\title{Boundary representations of $\la$-harmonic and polyharmonic functions on trees}
\author{\bf Massimo A. Picardello, Wolfgang Woess}
\address{\parbox{.8\linewidth}{Dipartimento di Matematica\\ 
Universit\`a di Roma ``Tor Vergata''\\
I-00133 Rome, Italy}}
\email{picard@axp.mat.uniroma2.it}

\address{\parbox{.8\linewidth}{Institut f\"ur Diskrete Mathematik,\\ 
Technische Universit\"at Graz,\\
Steyrergasse 30, A-8010 Graz, Austria}}
\email{woess@tugraz.at}

\thanks{Supported by Austrian Science Fund projects FWF P24028 and W1230. 
The second author acknowledges support by as a distinguished visiting scientist
at TU Graz}
\subjclass[2010] {31C20; 
                  05C05, 
                  28A25, 
                  60G50 
                  }
                  \keywords{Tree, stochastic transition operator, $\la$-harmonic functions, 
                  polyharmonic functions, Martin kernel, boundary integral}
\begin{abstract}
On a countable tree $T$, allowing vertices with infinite degree,
we consider an arbitrary stochastic irreducible nearest neighbour transition
operator $P$. We provide a boundary integral representation for
general eigenfunctions of $P$ with eigenvalue $\lambda \in C$.
This is possible whenever $\lambda$ is in the resolvent set of $P$
as a self-adjoint operator on a suitable $\ell^2$-space and the 
on-diagonal elements of the resolvent (``Green function'') do not vanish
at $\lambda$. We show that when $P$ is invariant under a transitive
(not necessarily fixed-point-free) group action, the latter condition
holds for all $\la \ne 0$ in the resolvent set. These results 
extend and complete previous results by Cartier, by Fig\`a-Talamanca and Steger,
and by Woess. For those eigenvalues, we also provide an integral
representation of $\lambda$-polyharmonic functions of any order $n$, 
that is, functions $f: T \to \C$ for which $(\la \cdot I - P)^nf=0$. 
This is a far-reaching extension of work of Cohen et al., who provided 
such a representation for simple random walk on a homogeneous tree and 
eigenvalue $\la =1$. Finally, we explain the (much simpler) analogous
results for ``forward only'' transition operators, sometimes also called
martingales on trees.
\end{abstract}

\maketitle
       
\markboth{{\sf M. A. Picardello and W. Woess}}
{{\sf Boundary representations on trees}}
\baselineskip 15pt

\section{Introduction}\label{sec:intro}

Let $T$ be a countable tree without leaves (vertices with only one neighbour).
We allow vertices with countably many neighbours. On $T$, we consider a
random walk which is of nearest neighbour type, that is, the transition probabilities
$p(x,y)$ are $> 0$ if and only if $x$ and $y$ are neighbours.
We are interested in general eigenfunctions of the transition operator 
acting on functions $f: T \to \C$ by
$$
Pf(x) = \sum_y p(x,y)f(y)\,.
$$
When this is an infinite sum, it is required to converge absolutely.
For $\la \in \C$, a \emph{$\la$-harmonic function} $h: T \to \C$ is one that
satisfies $Ph = \la \cdot h$. If we consider $\Delta = P - I$ as a 
discrete Laplace operator (where $I$ is the identity operator), then $h$ is an
eigenfunction of the Laplacian with eigenvalue $\la -1$.

First of all, if $\la$ is real and $\la > \rho(P)\,$, the \emph{spectral radius}
of the random walk, then every positive $\la$-harmonic function has
a unique integral representation over the boundary at infinity of $T$ with respect
to the \emph{$\la$-Martin kernel}, in analogy with the Poisson representation
of positive eigenfunctions of the Laplacian on the unit disk. The same holds for
$\la = \rho(P)$ in case $P$ is \emph{$\rho$-transient.}

Furthermore, for those eigenvalues one has a similar integral representation for 
\emph{any} real (or complex) eigenfunction, where the integral of the Martin kernel
is taken with respect to a \emph{distribution}, that is, a finitely additive signed 
measure which is defined on the collection of all \emph{boundary arcs.}
However, this distribution does in general not extend to a $\sigma$-additive
measure on the Borel $\sigma$-algebra of the boundary. 

When $T$ is locally finite and $\la > \rho(P)$ (or $=\rho(P)$ in the $\rho$-transient 
case), this is comprised in the seminal paper of {\sc Cartier}~\cite{Ca}; see 
also {\sc Picardello and Woess}~\cite{PiWo}.
The analogous result in the non-locally finite case is comprised in 
the textbook of {\sc Woess}~\cite[\S 9.D]{W-Markov}, which seemingly has remained
unobserved by researchers working in this field.

The first goal of this paper is to formulate and prove this general representation
theorem in the greatest possible generality, that is, for arbitrary complex-valued 
$\la$-harmonic functions, where $\la \in \res(P) = \C \setminus \spec(P)$, the resolvent set
of $P$.
Here, $P$ is interpreted as a self-adjoint, bounded operator on $\ell^2(T,\mm)$, where $\mm$ is
the measure on the vertex set which makes $P$ reversible, that is, $\mm(x)p(x,y) = \mm(y)p(y,x)$
for all $x,y \in T$. That measure is unique up to normalisation.
There is one restriction for the general representation, namely, that
$\la$ has to be such that the on-diagonal matrix elements $G(x,x|\la)$ of the resolvent 
operator $\mathfrak{G}(\la) = (\la\cdot I - P)^{-1}$ do not vanish. This holds always
when $|\la| \ge \rho(P)$.
(The use of the letter $\mathfrak{G}$ is motivated by the usual name ``Green function''
for its matrix elements.) We show that the condition on $\la$ allows us
to construct the general analogue of the $\la$-Martin kernel.
Our corresponding integral representation is Theorem \ref{thm:general} below.
Among other, this generalises a similar result of {\sc Fig\`a-Talamanca and Steger}~\cite{FiSt} 
concerning the
case when $T$ is the regular, locally finite tree which is the Cayley graph of the
group $\langle a_1\,,\dots, a_r \mid a_j= a_j^{-1} \,, j=1,\dots, r\rangle$, and when
$P$ is invariant under that group. 

Our Theorem \ref{thm:general} does not require any group structure. But in addition, 
we also study in more detail the specific case where $T$ is not necessarily locally finite
and $P$ is invariant under the action of an arbitrary group of automorphisms of $T$ 
which is not required to act with trivial vertex stabilisers, so that $T$ is
not necessarily a Cayley graph of that group. Indeed, its closure will be a locally compact group
that may be non-discrete and even non-unimodular. In this very general group-invariant situation, 
we  provide an extension of another result of \cite{FiSt}, namely, that the Green kernel
may vanish on the diagonal only for $\la = 0$, see Theorem \ref{thm:nonzero}. 
Thus, we have the integral representation in the group invariant case for all
$\la$-harmonic functions, where $\la \in \res(P)$, with the only possible
exception of $\la = 0$. 

Next, we turn our attention to \emph{$\la$-polyharmonic functions.} A function $f: T \to \C$
is $\la$-polyharmonic of order $n$, if 
$$
(\la\cdot I - P)^n f = 0.
$$  
If $n=1$, this means that $f$ is $\la$-harmonic. If $n=2$ this means that $\la\cdot f-Pf$
is $\la$-harmonic, and so on. 

In the setting of the classical Laplacian $\Delta$ on
a Euclidean domain, polyharmonic functions are functions for which $\Delta^n h \equiv 0$.
Their study goes back to work in the $19^{\text{th}}$ century, see e.g. 
{\sc Almansi}~\cite{Al}. A basic reference is the monograph by {\sc Aronszajn, 
Creese and Lipkin}~\cite{ACL}, and there is ongoing study. 
The discrete analogue of polyharmonic functions on trees ($\la = 1$)
was studied in a long paper by {\sc Cohen et al.}~\cite{CCGS}. For the special 
case of simple random
walk on a locally finite, homogeneous tree, they provide a boundary integral representation for
polyharmonic functions. Here, we provide a far-reaching generalisation: in Theorem \ref{thm:poly} 
we explain how this can by achieved more directly 
for arbitrary $\la$-polyharmonic functions in the general setting of a nearest neighbour 
random walk on a countable tree $T$ (locally finite or not), whenever 
$\la \in \res(P)$ fulfils the above  condition that $G(x,x|\la) \ne 0$ 
for every $x \in T$.  

Finally, in the appendix  \S \ref{sec:forward}, we explain how all the above results
are obtained very easily for ``forward only'' transition operators on rooted countable 
trees, see in particular Proposition \ref{pro:polyQ}. In this case, the only exceptional
eigenvalue is $\la =0$.

\section{Basic facts}\label{sec:basic}

We briefly recall the basic ingredients. For two vertices $x,y \in T$, we write $x \sim y$ 
if they are neighbours. The degree of $x$ is its number of neighbours. Given any pair 
of vertices $x, y$, the \emph{geodesic} or \emph{geodesic path} from $x$ to $y$ is the
unique shortest path $\pi(x,y)$ from $x$ to $y$, and the distance $d(x,y)$ is the
length (number of edges) of $\pi(x,y)$.
 
A \emph{ray} or \emph{geodesic ray} in $T$
is a sequence $[x_0\,,x_1\,,x_2\,,\dots]$ such that $x_{i-1}\sim x_i$ and all $x_i$
are distinct. Two rays are \emph{equivalent,} if they differ by finitely many initial
vertices. An \emph{end} of $T$ is an equivalence class of rays. For any vertex $x$ and end
$\xi$, there is a unique ray $\pi(x,\xi)$ which starts at $x$ and represents $\xi$.  We 
write $\partial T$ for the set of all ends of $T$. 
For $x, y \in T$ with $x \ne y$, the \emph{branch} or \emph{cone} $T_{x,y}$ is the subtree 
spanned by all vertices $w$ with $y \in \pi(x,w)$, and the \emph{boundary arc}
$\partial T_{x,y}$ is the set of all ends which have a representative ray in $T_{x,y}\,$.

We set $\wh T = T \cup \partial T$ and $\wh T_{x,y} = T_{x,y} \cup \partial T_{x,y}\,$.
We put the topology on $\wh T$ which is discrete on the vertex set, while a neighbourhood
base of $\xi \in \partial T$ is given by the collection of all $\wh T_{x,y}$ which contain
$\xi$. (Here, we may fix $x$ and vary only $y \ne x$.) We obtain a metrisable space which
is compact precisely when $T$ is locally finite (all vertex degrees are finite).
Otherwise, it is not complete. Following an idea of {\sc Soardi}, this defect can be 
overcome by introducing additional boundary points, one associated with each vertex of
infinite degree, see {\sc Cartwright, Soardi and Woess}~\cite{CaSoWo} or 
the exposition in \cite[\S 9.B]{W-Markov}.  
%
%

In order to describe convergence to ends, we choose a \emph{root} vertex $o \in T$.
For any pair of elements $v, w \in \wh T$, their \emph{confluent} $v \wedge w$ with respect
to $o$ is the last common vertex on the geodesics $\pi(o,v)$ and $\pi(o,w)$. Then
a sequence $(x_n)$ in $T$ converges to an end $\xi$ if and only if
$|x_n \wedge \xi| \to \infty$, where  $|x| = d(x,o)$. 

Next, let us turn to the random walk. Let $X_n$ be the random position at time $n \ge 0$.
The $n$-step transition probability $p^{(n)}(x,y) = \Prob[X_n=y | X_0=x]$ is the
$(x,y)$-element of the matrix power $P^n$, with $P^0=I$, the identity matrix. 
The \emph{spectral radius} 
$$
\rho = \rho(P) = \limsup_{n \to \infty} p^{(n)}(x,y)^{1/n}
$$ 
is independent of $x$ and $y$. Let $\la \in \C$ be such that $|\la| > \rho$. 
The associated \emph{Green function} is 
\begin{equation}\label{eq:green}
G(x,y|\la) = \sum_{n=0}^{\infty} p^{(n)}(x,y)\, \la^{-n-1}
\end{equation}
The series converges absolutely. It may or may not converge at $\la =\rho$, in which case we say
that the random walk is \emph{$\rho$-transient,} and \emph{$\rho$-recurrent,} respectively. 
Next, let us write 
$$
f^{(n)}(x,y) = \Prob[X_n = y,\; X_k \ne y \; \text{ for }\; k < n \mid X_0=x]
$$
for the probability that the random walk starting at $x$ reaches $y$ at time $n$, but not
before. We set
\begin{equation}\label{eq:FU}
F(x,y|\la) = \sum_{n=0}^{\infty} f^{(n)}(x,y)\, \la^{-n} \AND U(x,x|\la) = \sum_v p(x,v) F(v,x|\la)\,.
\end{equation}
Writing $U(x,x|\la)$ as a power series in $\la^{-1}$, the coefficient of $\la^{-n}$ is
the probability that the first return to the starting point $x$ occurs at time $n$ ($n \ge 1$).
These series certainly converge for $|\la| > \rho$, since this is true for the
Green function.

\begin{lem}\label{lem:crucial} The following identities hold on a tree for $|\la| > \rho$
as well as for $\la = \pm\rho$.
\\[4pt]
\emph{(a)} For any geodesic path $[x_0\,, x_1\,,\dots, x_k]\,$,
$$
F(x_0\,,x_k|\la) =  F(x_0\,,x_1|\la) F(x_1\,,x_2|\la) \cdots F(x_{k-1}\,,x_k|\la) 
$$
\emph{(b)} For any pair of neighbours $x,y$, we have $F(x,y|\la) \ne 0$ and
$$
\la \,F(x,y|\la) = p(x,y) + \sum_{v \ne y} p(x,v) \, F(v,x|\la)\, F(x,y|\la).
$$ 
\emph{(c)} For any pair of neighbours $x,y$, we have 
$$
\sum_v p(x,v)\,\big| F(v,x|\la) \big| \le |\la|
\AND \big| F(x,y|\la) F(y,x|\la) \big| \le 1,
$$ 
with equality only for $\la = \pm \rho$ in the $\rho$-recurrent case, and
$$
G(x,x|\la) = \frac{1}{\la - U(x,x|\la)} = \frac{F(x,y|\la)/p(x,y)}{1-F(x,y|\la) F(y,x|\la)}\,.
$$
\emph{(d)} For arbitrary $x, y \in T$,
$$
G(x,y|\la) =  F(x,y|\la)G(y,y|\la)\,.
$$
In case of vertices $x$ with infinite degree, the sums appearing in (b) and (c) converge
absolutely.
\end{lem}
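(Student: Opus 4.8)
The plan is to establish the four parts of Lemma~\ref{lem:crucial} by exploiting the tree structure, which forces the random walk to pass through every vertex on a geodesic, together with a first-entrance (or last-exit) decomposition of the walk. I would first prove the identities for $|\la|>\rho$, where all series converge absolutely and the manipulations are justified termwise, and only afterwards address the boundary case $\la=\pm\rho$ by a limiting argument.

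\textit{Part (a).} On a tree, the geodesic $[x_0,\dots,x_k]$ is the only path from $x_0$ to $x_k$, so any trajectory from $x_0$ that first hits $x_k$ must first hit $x_1$, then (from $x_1$) first hit $x_2$, and so on; moreover the walk cannot skip back past an intermediate vertex without revisiting it. Decomposing the first-passage time $f^{(n)}(x_0,x_k)$ according to the successive first-hitting times of $x_1,\dots,x_{k-1}$ and using that these increments are independent (Markov property) gives the convolution identity $f^{(\cdot)}(x_0,x_k)=f^{(\cdot)}(x_0,x_1)*\cdots*f^{(\cdot)}(x_{k-1},x_k)$; passing to the generating function in $\la^{-1}$ turns convolution into product and yields (a). It suffices to prove it for $k=2$ and iterate.

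\textit{Parts (b) and (c).} For neighbours $x,y$, condition on the first step of the walk started at $x$: either it steps directly to $y$ (contributing $p(x,y)$ to $\la F(x,y|\la)$, after shifting the index), or it steps to some $v\ne y$ and must then travel from $v$ back to $x$ before reaching $y$ — and on the tree the geodesic from $v$ to $y$ passes through $x$, so by (a) applied to $[v,\dots,x,y]$ we get $F(v,y|\la)=F(v,x|\la)F(x,y|\la)$. Summing over the first step gives the functional equation in (b). That $F(x,y|\la)\neq0$ for $|\la|>\rho$ is clear since $f^{(1)}(x,y)=p(x,y)>0$ is the leading coefficient; nonvanishing at $\la=\pm\rho$ follows because the power series has nonnegative coefficients with positive leading term, hence $F(x,y|\rho)\ge p(x,y)>0$ and $|F(x,y|-\rho)|\ge$ the value at $\rho$ of the even part, which is still positive. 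For (c), the inequality $\sum_v p(x,v)|F(v,x|\la)|\le|\la|$ follows by comparing coefficients with the real case $\la=|\la|$ (where $F$ has nonnegative coefficients and the sum equals $|\la|\,U(x,x||\la|)/\la$-type bound), together with the probabilistic fact that $\sum_v p(x,v)F(v,x|r)\le r$ for real $r\ge\rho$ because this quantity is $r$ times the probability of ever returning to $x$, which is at most $1$, with equality exactly in the $\rho$-recurrent case at $r=\rho$. The bound $|F(x,y|\la)F(y,x|\la)|\le1$ is the $k=2$ round-trip case. The two displayed formulas for $G(x,x|\la)$ come from the standard renewal identity $G(x,x|\la)=1/(\la-U(x,x|\la))$ (first-return decomposition) combined with (b): isolating the $v=y$-free part, $\la-U(x,x|\la)=\la-\sum_v p(x,v)F(v,x|\la)=\bigl(p(x,y)+\sum_{v\ne y}p(x,v)F(v,x|\la)F(x,y|\la)\bigr)/F(x,y|\la)-\sum_v p(x,v)F(v,x|\la)$, which after using (b) once more simplifies to $p(x,y)\bigl(1-F(x,y|\la)F(y,x|\la)\bigr)/F(x,y|\la)$; note $U(x,x|\la)$ does not depend on the chosen neighbour $y$, so all choices agree.

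\textit{Part (d).} Decompose a trajectory from $x$ to $y$ by its \emph{first} visit to $y$: $p^{(n)}(x,y)=\sum_{k\le n} f^{(k)}(x,y)\,p^{(n-k)}(y,y)$, so $G(x,y|\la)=F(x,y|\la)G(y,y|\la)$ after multiplying by $\la^{-n-1}$ and summing — this needs no tree structure and holds for all $|\la|>\rho$, hence at $\la=\pm\rho$ by continuity when the series converge. The main obstacle, and the one place requiring care, is the passage to $\la=\pm\rho$ and the simultaneous handling of vertices of infinite degree: I would note that for $|\la|\ge\rho$ all the relevant power series have radius of convergence $\ge 1/\rho$ in $\la^{-1}$ with coefficients dominated (up to sign at $\la=-\rho$) by those at $\la=\rho$, so absolute convergence of the sums over the (possibly infinite) neighbour set $\{v:v\sim x\}$ in (b) and (c) follows from $\sum_v p(x,v)F(v,x|\rho)\le\rho<\infty$ via dominated convergence/comparison, and the identities extend from $|\la|>\rho$ by continuity of each side. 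The claim about equality holding only at $\la=\pm\rho$ in the $\rho$-recurrent case is exactly the statement that $\sum_v p(x,v)F(v,x|\rho)=\rho$ iff the return probability is $1$ iff the walk is recurrent, which is the definition of $\rho$-recurrence.
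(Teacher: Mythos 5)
Your route is the paper's route: first‑passage factorisation along geodesics for (a), conditioning on the first step plus (a) for the functional equation in (b), the renewal identities $G(x,x|\la)=1/\bigl(\la-U(x,x|\la)\bigr)$ and $G(x,y|\la)=F(x,y|\la)G(y,y|\la)$ for (c)--(d), and the rearrangement $F(x,y|\la)\bigl(\la-U(x,x|\la)\bigr)=p(x,y)\bigl(1-F(x,y|\la)F(y,x|\la)\bigr)$. However, your justification of the two inequalities in (c) --- and hence of $F(x,y|\la)\ne0$ --- has a genuine gap. You assert that $\sum_v p(x,v)F(v,x|r)\le r$ for real $r\ge\rho$ ``because this quantity is $r$ times the probability of ever returning to $x$''. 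That identification is false: $U(x,x|r)=\sum_{n\ge1}u^{(n)}r^{-n}$ with $u^{(n)}$ the first‑return probabilities, and in the regime $\rho\le r<1$ --- which is the generic transient case on a tree, e.g.\ $\rho=2\sqrt{q}/(q+1)<1$ for simple random walk on $\T_q$ --- every term is \emph{inflated} by $r^{-n}>1$, so no elementary probabilistic bound gives $U(x,x|r)\le r$. The step that is not pure bookkeeping is exactly this one, and the paper handles it by observing that $G(x,x|\cdot)$ is analytic (finite) on $\{|\la|>\rho\}$, so $\la-U(x,x|\la)$ cannot vanish at any real $\la_0>\rho$; since $U(x,x|\la)$ is continuous, decreasing and tends to $0$ as $\la\to+\infty$, this forces $U(x,x|\la)<\la$ on $(\rho,\infty)$ and $U(x,x|\rho)\le\rho$, with equality exactly in the $\rho$-recurrent case. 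The complex case then follows by coefficient domination, $\sum_v p(x,v)|F(v,x|\la)|\le U\bigl(x,x\big|\,|\la|\bigr)$. The same ``no pole of $G$'' device is needed for $F(x,y|r)F(y,x|r)\le1$: your ``$k=2$ round‑trip'' remark is not a proof, because for $r<1$ the factors $F(x,y|r)$ are not probabilities and can individually exceed $1$.

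Two further points. The nonvanishing of $F(x,y|\la)$ for \emph{complex} $\la$ with $|\la|>\rho$ does not follow from ``$p(x,y)>0$ is the leading coefficient'': a power series with positive leading coefficient can perfectly well vanish inside its disc of convergence. The paper deduces it from the inequality just discussed: by (b), $F(x,y|\la)=p(x,y)\big/\bigl(\la-\sum_{v\ne y}p(x,v)F(v,x|\la)\bigr)$, and the denominator is nonzero because $\sum_{v\ne y}p(x,v)|F(v,x|\la)|<|\la|$. Finally, your treatment of $\la=-\rho$ via ``the even part'' is off: for neighbours on a tree only odd $n$ contribute to $f^{(n)}(x,y)$, so $F(x,y|-\rho)=-F(x,y|\rho)\ne0$; the even part you invoke is identically zero. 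With these repairs your argument coincides with the paper's.
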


\begin{proof}
For (a) and the formula displayed in (b), see e.g. \cite[Prop. 9.3]{W-Markov}, 
while the first of the two displayed formulas for $G(x,x|\la)$ in (c), 
as well as (d) are valid for general Markov chains \cite[Thm 1.38]{W-Markov}. 

Regarding (c), as mentioned above,
the function $U(x,x|\la)$ is a power series in $\la^{-1}$
whose coefficients are non-negative.
It  converges for $|\la| > \rho$, and for positive $\la \in [\rho\,,\,+\infty)$, it is
continuous and decreasing with value $0$ at $+\infty$. If one had $U(x,x|\la_0) = \la_0$ for some
positive $\la_0 > \rho$, then $G(x,x|\la)$ would have a pole at $\la_0$, in contradiction
with the fact that it is analytic for $|\la| > \rho$. Thus
$U(x,x|\la)  < \la$ for every real $\la > \rho$, and $U(x,x|\rho) = U(x,x|-\rho) \le \la$ with 
equality precisely 
when the random walk is $\rho$-recurrent.  Consequently, for complex $\la$ with $|\la| > \rho$,
and even for $|\la| \ge \rho$ in the $\rho$-transient case,
we have 
$$
\sum_v p(x,v)\,\big| F(v,x|\la) \big| \le 
U\bigl(x,x\big|\,|\la|\bigr)  < |\la|\,.
$$
This proves the first inequality stated in (c), and it shows that for $|\la|=\rho$,
also $\big|F(x,y|\la)\big| \le F(x,y|\rho) < \infty$.

It also implies that in the context of (b), one has for complex $|\la| > \rho$, 
resp., for $|\la|\ge \rho$ in the $\rho$-transient case that 
$$
 \sum_{v \ne y} p(x,v) \bigl| F(v,x|\la) \bigr| < |\la|\,,
$$ 
which in turn implies $F(x,y|\la) \ne 0\,$, as stated in (b).

These arguments also comprise the statement on absolute convergence.

Finally, for the second inequality and the formula for $G(x,x|\la)$ in (c), we use (b) to
see that 
\begin{equation}\label{eq:FFF}
F(x,y|\la) \Bigl(\la - U(x,x|\la)\Bigr) = p(x,y)\Bigl(1 - F(x,y|\la)F(y,x|\la)\Bigr)\,.
\end{equation}
This yields the displayed formula, as well as the second
inequality in the same way as the first one.
\end{proof}

At this point, 
we can define the \emph{$\la$-Martin kernel} by
\begin{equation}\label{eq:Mkernel}
 K(x,w|\la) = \frac{G(x,x \wedge w|\la)}{G(o,x \wedge w|\la)}
= \frac{F(x,x \wedge w|\la)}{F(o,x \wedge w|\la)}\,,\quad x \in T\,,\; w \in \wh T\,.
\end{equation}
For fixed $x$, it is continuous in the second variable.
It is an easy exercise to derive from Lemma \ref{lem:crucial} that for any $\xi \in \partial T$,
the function $x \mapsto K(x,\xi|\la)$ is a $\la$-harmonic function (see also below):
\begin{equation}\label{eq:eigen}
\sum_{y \sim x} p(x,y) K(y,\xi|\la) = \la \, K(x,\xi|\la) \quad \text{for every}\; x \in T. 
\end{equation}
Note that when $x$ has infinite degree, this sum does indeed converge absolutely by Lemma
\ref{lem:crucial}.
The following is well known.

\begin{pro}\label{pro:martin}
For real $\la > \rho$, as well as for $\la = \rho$ in the $\rho$-transient case,
 every \emph{positive} $\la$-harmonic function $h$ has a unique \emph{Poisson-Martin} integral
representation
$$
h(x) = \int_{\partial T} K(x,\xi|\la)\, d\nu^h(\xi)\,,
$$
where $\nu^h$ is a non-negative Borel measure on $\partial T$ with total mass $h(o)$.
\end{pro}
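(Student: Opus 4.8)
The plan is to reduce Proposition~\ref{pro:martin} to the general Poisson--Martin representation theorem for positive $\la$-harmonic functions on Markov chains, which in the present setting takes on a particularly clean form because $T$ is a tree. First I would set up the $\la$-process: for real $\la > \rho$ (or $\la=\rho$ in the $\rho$-transient case), the substochastic kernel is not directly usable, so instead one performs an $h_0$-transform using a fixed positive $\la$-harmonic reference function, or — more efficiently on a tree — one works directly with the Green function $G(\cdot,\cdot|\la)$, which is finite by Lemma~\ref{lem:crucial} and strictly positive. The Martin compactification with respect to the kernels $x \mapsto G(x,y|\la)/G(o,y|\la)$ is then identified: using the factorisation $G(x,y|\la) = F(x,y|\la)G(y,y|\la)$ from Lemma~\ref{lem:crucial}(d) together with the multiplicativity $F(x_0,x_k|\la) = \prod F(x_{i-1},x_i|\la)$ from part~(a), one checks that $G(x,y_n|\la)/G(o,y_n|\la)$ depends only on the confluent $x \wedge y_n$, and as $y_n \to \xi \in \partial T$ this stabilises to $K(x,\xi|\la)$ as defined in \eqref{eq:Mkernel}. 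Hence the Martin boundary (minimal or not) embeds into $\partial T$, and in fact every $K(\cdot,\xi|\la)$ is minimal harmonic because on a tree the cone structure makes the harmonic measures mutually singular.

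The core analytic input is the Poisson--Martin representation theorem: any positive $\la$-harmonic $h$ can be written as an integral of minimal positive $\la$-harmonic functions against a unique Borel measure on the minimal Martin boundary. I would invoke this in the form given for transient chains (after the $\la$-transform, the $\la$-process is genuinely transient since $\la > \rho$), cite \cite[\S7, \S9.C]{W-Markov} or \cite{Ca,PiWo} for the tree case, and then transport the measure along the identification of the minimal boundary with $\partial T$ established above. The normalisation $\nu^h(\partial T) = h(o)$ falls out by evaluating the representation at $x=o$ and using $K(o,\xi|\la) = 1$, which is immediate from \eqref{eq:Mkernel} since $o \wedge w$ lies on $\pi(o,o)$ forcing the numerator and denominator to coincide when $x=o$. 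Uniqueness likewise follows from the uniqueness clause in the abstract representation theorem, once minimality of each $K(\cdot,\xi|\la)$ is in hand.

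The main obstacle — and the reason this is stated as ``well known'' rather than proved in detail — is the non-locally-finite case: when $T$ has vertices of infinite degree, $\wh T$ is not compact, so one cannot simply appeal to vague compactness of measures on a compact Martin boundary, and one must either work with the Soardi-type completion of $\wh T$ mentioned earlier (adding one boundary point per infinite-degree vertex, following \cite{CaSoWo}) or argue directly that the representing measure of a positive harmonic function gives no mass to those extra points. The cleanest route is to quote \cite[\S9.C--D]{W-Markov}, where exactly this is carried out: the abstract Martin theory is applied to the $\la$-process, the minimal boundary is shown to coincide with $\partial T$ (the extra Soardi points turn out to be non-minimal, or to carry no mass for positive harmonic functions), and the integral representation with the stated normalisation and uniqueness is obtained. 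Thus in the paper itself I would give only a brief indication along these lines, with precise references, and move on.
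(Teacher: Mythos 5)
Your proposal matches the paper's treatment: the paper gives no detailed proof either, but cites Cartier for the locally finite case and Soardi/Cartwright--Soardi--Woess and \cite[\S 9.B+C]{W-Markov} for the non-locally finite case, and reduces general positive $\la$ to the standard case $\la=1$ via the $h$-process $p_h(x,y)=p(x,y)h(y)/\bigl(\la\,h(x)\bigr)$ --- exactly the transform-plus-citation strategy you outline, including your correct identification of the non-compactness of $\wh T$ as the delicate point handled by the Soardi completion. The only cosmetic difference is that the paper transforms by the function $h$ to be represented itself (so that the constant function is harmonic for $P_h$) rather than by an auxiliary reference function, but this is the same idea.
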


For the case when $T$ is locally finite, this as well as the identities of Lemma \ref{lem:crucial}
are comprised in the influential and elegant paper of {\sc Cartier}~\cite{Ca}. 
The extension to the non-locally finite case goes back to {\sc Soardi}, see \cite{CaSoWo} and 
\cite[\S 9.B+C]{W-Markov}. In the last two references, it is assumed that $\la=1$. For general
positive $\la$ and $h$, the result is obtained by applying the standard case to the 
\emph{$h$-process,} whose transition probabilities are 
$p_h(x,y) = p(x,y)h(y)\big/\bigl(\la\, h(x)\bigr)$.

As outlined in the introduction, one of the aims of this paper is an integral representation
for all $\la$-harmonic functions for arbitrary 
$\la \in \res(P) = \C \setminus \spec(P)$ up to some possible exceptional values.
Proposition \ref{pro:martin} will follow from that general result.
The spectrum $\spec(P) \subset [-\rho\,,\,\rho]$
is going to be defined in the next part, which is going to be important for
all subsequent considerations.

\medskip
 
\begin{rmksan}\label{rmk:an} 
We define a measure $\mm$ on $T$ as follows: 
$$
\text{for }\; x \in T\;{ with }\; \pi(o,x) = [x_0\,,x_1\,,\dots, x_k]\,,\quad
\mm(x)= \frac{p(x_0\,,x_1) \cdots p(x_{k-1}\,,x_k)}{p(x_1\,,x_0) \cdots p(x_k\,,x_{k-1})}.
$$
In particular, $\mm(o)=1$. We have \emph{reversibility:} $\mm(x)p(x,y) = \mm(y)p(y,x)$ for all $x,y$.
Thus, $P$ acts as a self-adjoint operator on the Hilbert space $\ell^2(T,\mm)$ of all functions 
$f:T \to \R$
with $\langle f, f \rangle < \infty$, where
$$
\langle f, g \rangle = \sum_{x} f(x)g(x)\,\mm(x)\,.
$$
It is well-known that the spectral radius (= norm) of this operator is $\rho$, and that 
$\spec(P)$ is symmetric around the origin.
Now, for $\la \in \C$ with $|\la| > \rho$, the function
$G(x,y|\la)$ is the $(x,y)$-matrix element of the resolvent operator 
$\mathfrak{G}(\la) = (\la\,I - P)^{-1}$ on $\ell^2(T,\mm)$.
Therefore, it extends analytically to  the resolvent set
$\res(P) \subset \C \setminus [-\rho\,,\,\rho]$.

In particular, we note at this point that for $\la \in \res(P)$, the function
$G(x,x|\la)$ has no pole, so that by continuity we see from
Lemma \ref{lem:crucial} and its proof that 
\begin{equation}\label{eq:aboverho}
\big|U(x,x|\la)\big| < |\la|\,, \; G(x,y|\la) \ne 0   \AND 
\big|F(x,y|\la)F(y,x|\la)\big| < 1 
\end{equation}
for $x \sim y$ and $|\la| > \rho\,$, 
and also for $|\la| = \rho$ in the $\rho$-transient case.
Now, 
$$
F(x,y|\la) = G(x,y|\la)/G(y,y|\la) \AND U(x,x|\la) = \la - 1/G(x,x|\la)
$$
extend to meromorphic functions in $\la$ on $\res(P)$. 
We conclude from Lemma \ref{lem:crucial} that for neighbours $x,y \in T$, 
\begin{equation}\label{eq:nonzero}
\begin{gathered}
\text{if } \; G(x,x|\la)G(y,y|\la) \ne 0 \quad \text{then} \quad 
U(x,x|\la) \;\text{ and }\;  F(x,y|\la) \;\text{ are analytic at}\; \la\,,\\
F(x,y| \la) \ne 0 \AND F(x,y|\la)F(y,x|\la) \ne 1
\end{gathered}
\end{equation}
The case when $G(x,x|\la_0)=0$ is quite special.
We know from Lemma \ref{lem:crucial}(c) and \eqref{eq:aboverho} 
that $G(x,x|\la) \ne 0$ for $\la \in \C$ with $|\la| > \rho$.
Replacing $F(x,y|\la)$ by $G(x,y|\la)/G(y,y|\la)$ and $F(y,x|\la)$ by $G(y,x|\la)/G(x,x|\la)$,
we can transform the second identity of Lemma \ref{lem:crucial}(c) into
\begin{equation}\label{eq:G}
\frac{G(x,y|\la)}{p(x,y)} = G(x,x|\la)G(y,y|\la)- G(x,y|\la)G(y,x|\la) = \frac{G(y,x|\la)}{p(y,x)}.
\end{equation}
A priori, this holds for $|\la| > \rho$, but by analytic continuation, it must also hold
in all of $\res(P)$. 
Reordering the terms,
\begin{equation}\label{eq:reorder}
G(x,y|\la)\Bigl(\frac{1}{p(x,y)} + G(y,x|\la)\Bigr) = G(x,x|\la)G(y,y|\la)
\end{equation}
Thus, if $G(x,x|\la_0) = 0$ for some $\la_0 \in \res(P)$,
then for any $y \sim x$, 
$$
p(x,y)G(y,x|\la_0) = p(y,x)G(x,y|\la_0) \in \{ 0,  -1\}\,.
$$
Now, for any $\la \in \res(P)$, we have that $G(\cdot,x|\la) \in \ell^2(T,\mm)$
and $PG(\cdot,x|\la) = \la\,G(\cdot,x|\la) - \de_x\,$. 
In particular, when $G(x,x|\la_0) = 0$ (which can only happen for $|\la| < \rho$)
$$
\sum_y p(x,y) G(y,x|\la_0) = -1\,,
$$
whence $p(x,y)G(y,x|\la_0) = p(y,x)G(x,y|\la_0)$ must have value $-1$ for precisely one neighbour 
of $x$, while the value is $0$ for all
other neighbours. In presence of invariance of the transition probabilities under
a transitive action of a group of automorphisms of trees, this fact can be used
to exclude that the degenerate case $G(x,x|\la_0) = 0$ can occur, see further below.\hfill$\square$
\end{rmksan}


\section{The general Poisson-Martin integral representation on trees}\label{sec:intrep}

In view of last considerations on analytic continuation, we now define
$$
\res^*(P) = \{ \la \in \res(P): G(x,x|\la) \ne 0 \; \text{for all}\; x\}.
$$
\eqref{eq:nonzero} tells us that for any $\la \in \res^*(P)$, we can form 
the $\la$-Martin kernel \eqref{eq:Mkernel}, because the denominator does not
vanish. By our assumptions, $PK(\cdot,w|\lambda)$ is well defined as a function 
of the first variable. That is, even at vertices with infinite degree, 
the involved sum is absolutely convergent, and for $\xi \in \partial T$,
equation
\eqref{eq:eigen} is valid even when $\la \in \res^*(P)$ with $|\la| \le \rho$.
Indeed, to be precise, write 
$$
K(x,\xi|\la) = \frac{G(x,v|\la)}{G(o,v|\la)}\,, \quad \text{where }\;
v \in \pi(o,\xi)\,,\; v^- = x \wedge \xi\,,
$$
so that we can compute in $\ell^2(T,\mm)$
\begin{equation}\label{eq:PKe}
P K(x,\xi|\la) = \frac{1}{G(o,v|\la)} \,P\mathfrak{G}(\la)\uno_v(x) = 
\frac{1}{G(o,v|\la)} \,\la\, \mathfrak{G}(\la)\uno_v(x)\,.
\end{equation}
We now consider distributions. For any $x \in T \setminus \{o\}$, is \emph{predecessor} 
$x^-$ is the neighbour of $x$ which is closer to $o$. We write $T_x = T_{o,x}\,$, 
as well as $T_o = T$. A \emph{complex distribution} on the collection of all boundary arcs
$$
\mathcal{F}_o = \{ \partial T_x : x \in T \}
$$
is a set function $\nu: \mathcal{F}_o \to \C$ such that for every $x$,
\begin{equation}\label{eq:measure}
\nu(\partial T_x) = \sum_{y: y^-=x} \nu(\partial T_y)\,. 
\end{equation}
When $\deg(x)=\infty$, we require that this sum converges absolutely. Note that this
does in general not imply that $\nu$ extends to a $\sigma$-additive distribution on the Borel 
$\sigma$-algebra of $\partial T$. However, $\nu$ clearly extends to the collection of 
all boundary arcs $\partial T_{x,y}\,$, where $x, y \in T$ ($x \ne y$). Indeed, if
$o \notin T_{x,y}$ then $\partial T_{x,y} = \partial T_y \in \mathcal{F}_o\,$. If 
$o \in T_{x,y}$ then $\partial T \setminus \partial T_{x,y} = \partial T_{y,x} = \partial T_x$
and we can define $\nu(\partial T_{x,y}) = \nu(\partial T) - \nu(\partial T_x)$. 
In particular, after a change of the root from $o$ to $o'$, the distribution satisfies  
\eqref{eq:measure} also on $\mathcal{F}_{o'}\,$.

\smallskip

A \emph{locally constant function} on $T$ is a function $f: T \to \C$ such that the set
of edges
$$
E_f = \{ [x,y] \in E(T) : f(x) \ne f(y) \} \quad \text{is finite.}
$$
The union of all geodesic segments which connect $o$ to the endpoints of the edges in $E_f$ 
forms a finite subtree $\tau_f$ of $T$. If $\tau$ is any finite subtree of $T$ containing
$o$, and $x \in \tau$, then let 
$$
S_{\tau}(x) = \{ y \in \tau : y^- = x\}\,,
$$
a finite set, possibly empty (the successors of $x$ in $\tau$). If $\tau = \tau_f$ for $f$ as above,
then $f$ constant on each set $T_x \setminus \bigcup \bigl\{ T_y : y \in S_{\tau}(x) \bigr\}$.
In particular, $f$ extends to a continuous function on $\partial T$. We also call the resulting
function on $\partial T$ locally constant. Thus, a function $\varphi$ on $\partial T$
is locally constant if there is a finite subtree $\tau$ of $T$ which contains $o$ such that
\begin{equation}\label{eq:loccost}
\varphi  \; \text{ has constant value on each set }\;   
\partial T_x \setminus \bigcup \bigl\{ \partial T_y : y \in S_{\tau}(x) \bigr\}\,,\; x \in \tau\,.
\end{equation}
We shall denote that value by $\varphi_x$.

This definition is clearly independent of the choice of the root, and if $\tau'$ is another
finite subtree which fulfils the same requirement for $\varphi$, then so does the union
$\tau \cup \tau'$.  We can now define
\begin{equation}\label{eq:int}
 \int_{\partial T} \varphi\, d\nu = \sum_{x \in \tau} \varphi_x\, 
\biggl( \nu(\partial T_x) - \sum_{y \in S_{\tau}(x)}  \nu(\partial T_y) \biggr).
\end{equation}
\eqref{eq:measure} guarantees that this definition does not depend on the specific
choice of the finite tree $\tau$ for which \eqref{eq:loccost} holds. It also does not
depend on the choice of the root $o$. Consequently,
the integral is linear on the vector space of all locally constant functions on $\partial T$. 

\smallskip

We return to the $\la$-Martin kernel, where $\la \in \res^*(P)$.

Now let $x \in T$ and $\pi(o,x) = [o=x_0\,,x_1\,,\dots, x_k=x]$.
Then $x \wedge \xi \in \{ x_0\,,x_1\,,\dots, x_k\}$ for every $\xi \in \partial T$,
and 
$$
K(x,\xi|\la) = K(x,x_i|\la) \quad\text{when}\quad 
\begin{cases} \xi \in \partial T_{x_i}\setminus \partial T_{x_{i+1}}\,,&i \le k-1,\\
              \xi \in \partial T_{x_k}\,,&i=k\,.
\end{cases}
$$  
Thus, $\varphi = K(x,\cdot|\la)$ is locally constant on $\partial T$, and 
we can use $\pi(o,x)$ as a tree $\tau$ for which \eqref{eq:loccost} holds. 
We subsume.

\begin{pro}\label{pro:transf}
For a distribution $\nu$ as above, the function 
$$
h(x) = \int_{\partial T} K(x,\xi|\la)\, d\nu(\xi)
$$
is a $\la$-harmonic function, and
\begin{equation}\label{eq:int'}
\begin{aligned}
h(x) 
&= \sum_{i=0}^{k-1} K(x,x_i|\la) 
\Bigl(\nu(\partial T_{x_i}) - \nu(\partial T_{x_{i+1}})\Bigr) + K(x,x|\la)\,\nu(\partial T_{x})\\
&= K(x,o|\la)\nu(\partial T) + \sum_{i=1}^k \Bigl(K(x,x_i|\la)-   K(x,x_{i-1}|\la)\Bigr)
 \nu(\partial T_{x_i})\,.
\end{aligned}
\end{equation}
\end{pro}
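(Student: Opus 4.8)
The plan is to verify the two claims of Proposition \ref{pro:transf} separately: first the closed-form expression \eqref{eq:int'} for $h$, and then $\la$-harmonicity. For the formula, I would simply apply the definition \eqref{eq:int} of the integral to the locally constant function $\varphi = K(x,\cdot|\la)$. As observed just before the statement, $\varphi$ has constant value $K(x,x_i|\la)$ on the set $\partial T_{x_i}\setminus\partial T_{x_{i+1}}$ for $i\le k-1$ and value $K(x,x_k|\la)$ on $\partial T_{x_k}$, so one may take $\tau = \pi(o,x)$ in \eqref{eq:loccost}; here $S_\tau(x_i) = \{x_{i+1}\}$ for $i<k$ and $S_\tau(x_k)=\emptyset$. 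Plugging these $\varphi_{x_i}$ and $S_\tau(x_i)$ into \eqref{eq:int} gives the first line of \eqref{eq:int'} verbatim. The second line is then pure bookkeeping: it is the Abel summation (summation by parts) rewriting of the first, using $\partial T_{x_0}=\partial T$ and regrouping the coefficients of the $\nu(\partial T_{x_i})$; I would carry this out in one or two lines without further comment.

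For $\la$-harmonicity, the point is to show $Ph(x)=\la\,h(x)$ for each fixed $x\in T$, where $P$ acts on the first variable. Since $h$ is a finite linear combination of the functions $y\mapsto K(y,x_i|\la)$ with the coefficients above depending only on $\nu$ (and on the chosen $x$, but those coefficients are scalars once we think of $y$ as the variable), it suffices in principle to know that each $y\mapsto K(y,w|\la)$, $w\in T$, behaves well under $P$. The cleanest route is the Hilbert-space identity already set up in \eqref{eq:PKe}: for $w\in T$ one has $K(\cdot,w|\la) = G(\cdot,w|\la)/G(o,w|\la) = \mathfrak{G}(\la)\uno_w / G(o,w|\la)$ as an element of $\ell^2(T,\mm)$, and $P\mathfrak{G}(\la)\uno_w = \la\,\mathfrak{G}(\la)\uno_w - \uno_w$, so $PK(\cdot,w|\la)(y) = \la\,K(y,w|\la) - \uno_w(y)/G(o,w|\la)$. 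Thus $y\mapsto K(y,w|\la)$ fails to be $\la$-harmonic only by the single correction term $-\uno_w/G(o,w|\la)$ sitting at the vertex $w$. When we form $h$, however, these correction terms must cancel: the telescoping structure of the coefficients in the second line of \eqref{eq:int'}, together with the defining relation \eqref{eq:measure} for the distribution $\nu$, is exactly what kills the residual mass at each vertex. Concretely, fixing the target vertex $x$ and expanding $Ph(x)$ via this identity, the $\la\cdot(\text{same sum})$ part reproduces $\la\,h(x)$, and the leftover $\uno$-terms assemble into $\sum_{y:\,y^-=x}\nu(\partial T_y) - \nu(\partial T_x)$ (up to the normalising factors $G(o,\cdot|\la)$ and the Martin-kernel identity $K(x,\cdot|\la)$ relating branches), which vanishes by \eqref{eq:measure}.

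An alternative, more elementary route that avoids $\ell^2$-bookkeeping is to argue directly with \eqref{eq:eigen}: one approximates, or rather one notes that for $\xi\in\partial T$ the function $x\mapsto K(x,\xi|\la)$ is genuinely $\la$-harmonic by \eqref{eq:eigen}, and then one would like to pass $P$ through the integral sign. Since the integral \eqref{eq:int} over a locally constant integrand is just a finite sum of point evaluations of $\nu$ against values $K(x,x_i|\la)$, and since for a fixed neighbour structure around $x$ only finitely many branch-constants are involved, interchanging $P$ (a possibly infinite but absolutely convergent sum over $y\sim x$, by Lemma \ref{lem:crucial}) with this finite sum is legitimate; then \eqref{eq:eigen} applied inside finishes it. I would present whichever of the two is shorter in the surrounding text — likely the direct one, since the locally constant structure makes everything a finite manipulation.

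\textbf{Main obstacle.} The only genuinely delicate point is the interchange of $P$ with the integral at a vertex $x$ of infinite degree: one must check that $\sum_{y\sim x} p(x,y)\,h(y)$ converges absolutely and equals the sum obtained by interchanging. This is where Lemma \ref{lem:crucial}(c) — the bound $\sum_v p(x,v)|F(v,x|\la)|\le|\la|$, valid throughout $\res(P)$ by \eqref{eq:aboverho}–\eqref{eq:nonzero} — does the work, controlling $|K(y,w|\la)|$ uniformly enough along the neighbours of $x$; I would isolate this estimate as the one nontrivial step and treat the rest as routine.
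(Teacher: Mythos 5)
Your proposal is correct and matches the paper's (very terse) argument: the paper obtains \eqref{eq:int'} exactly as you do, by applying the definition \eqref{eq:int} to the locally constant function $K(x,\cdot|\la)$ with $\tau=\pi(o,x)$, and then disposes of $\la$-harmonicity in one sentence by invoking \eqref{eq:int'}, absolute convergence in \eqref{eq:measure}, and the resolvent identity \eqref{eq:PKe} — precisely your first route. Your isolation of the interchange at infinite-degree vertices as the only delicate point, handled via the $\ell^2$ resolvent calculus rather than a naive termwise argument, is exactly where the paper places the weight as well.
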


We call $h$ the \emph{Poisson transform} of $\nu$. The fact that $Ph = \la \cdot h$ follows
from \eqref{eq:int'}, using absolute convergence in \eqref{eq:measure} and \eqref{eq:PKe}.

As outlined in the introduction, for real $\la > \rho$, the following theorem 
goes back to \cite{Ca}  in the locally finite case.
The below proof transfers \cite[Thm. 9.37]{W-Markov} to complex $\la$; 
we present its main part here
with additional care regarding absolute convergence in the non-locally finite case. 

\begin{thm}\label{thm:general} Let $\la \in \res^*(P)$, or $\la = \pm \rho$ in
the $\rho$-transient case.
A function $h: T \to \C$ satisfies $Ph = \la \cdot h$ if and only
if it is of the form
$$
h(x) = \int_{\partial T} K(x,\xi|\la)\,d\nu(\xi)\,,
$$
where $\nu$ is a complex distribution on $\mathcal{F}_o\,.$ The distribution $\nu$ is 
determined by $h$, that is, $\nu = \nu^h$, where  
$$
\nu^h(\partial T)= h(o) \AND 
\nu^h(\partial T_x) = F(o,x|\la)\,\frac{h(x)-F(x,x^-|\la)h(x^-)}{1-F(x^-,x|\la)F(x,x^-|\la)}\,,
\; x \ne o\,.
$$
\end{thm}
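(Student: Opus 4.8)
The plan is to prove the two implications separately, then establish uniqueness of the distribution. First I would check that any Poisson transform is $\la$-harmonic: this is exactly Proposition~\ref{pro:transf}, so one direction is free. The substantive direction is to show that an arbitrary solution $h$ of $Ph = \la\cdot h$ arises this way, with $\nu = \nu^h$ given by the stated formula. So the real work is: (i) verify that the prescribed set function $\nu^h$ is a well-defined complex distribution on $\mathcal{F}_o$, i.e. that it satisfies the additivity relation \eqref{eq:measure}, including absolute convergence at vertices of infinite degree; and (ii) verify that its Poisson transform reproduces $h$.

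For step (i), I would start from the eigenfunction equation $\la\,h(x) = \sum_{y\sim x} p(x,y)\,h(y)$ at a fixed vertex $x$, split off the predecessor term $p(x,x^-)h(x^-)$, and substitute the definition of $\nu^h(\partial T_y)$ for the successors $y$ of $x$. Using Lemma~\ref{lem:crucial}(a) to factor $F(o,y|\la) = F(o,x|\la)F(x,y|\la)$ and the relation $\la F(x,y|\la) = p(x,y) + \sum_{v\ne y}p(x,v)F(v,x|\la)F(x,y|\la)$ from Lemma~\ref{lem:crucial}(b), together with the identity $G(x,x|\la) = \bigl(F(x,y|\la)/p(x,y)\bigr)\big/\bigl(1 - F(x,y|\la)F(y,x|\la)\bigr)$, the sum $\sum_{y:y^-=x}\nu^h(\partial T_y)$ should collapse to $\nu^h(\partial T_x)$ after routine algebra; the case $x=o$ is handled separately using $\nu^h(\partial T)=h(o)$. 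Absolute convergence of this sum when $\deg(x)=\infty$ follows from the bound $\sum_v p(x,v)|F(v,x|\la)| < |\la|$ in Lemma~\ref{lem:crucial}(c) (valid since $\la\in\res^*(P)$, via \eqref{eq:aboverho} and \eqref{eq:nonzero}), which controls the tail of the series.

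For step (ii), I would compute the Poisson transform of $\nu^h$ using the second line of \eqref{eq:int'}: along the geodesic $\pi(o,x)=[o=x_0,\dots,x_k=x]$,
\[
(\text{Poisson transform of }\nu^h)(x) = K(x,o|\la)\,h(o) + \sum_{i=1}^k\bigl(K(x,x_i|\la)-K(x,x_{i-1}|\la)\bigr)\,\nu^h(\partial T_{x_i}).
\]
Now $K(x,x_i|\la) = F(x,x_i|\la)/F(o,x_i|\la)$ for $i\le$ the index where $x_i = x\wedge x_i$; since $x_i\in\pi(o,x)$, one has $x\wedge x_i = x_i$, so $K(x,x_i|\la) = 1/F(o,x_i|\la) = 1/\bigl(F(o,x_{i-1}|\la)F(x_{i-1},x_i|\la)\bigr)$, and $K(x,x_{i-1}|\la) = 1/F(o,x_{i-1}|\la)$. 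Substituting the formula for $\nu^h(\partial T_{x_i})$ and telescoping, the sum should reduce to $h(x_k)=h(x)$; the factor $F(o,x_i|\la)$ in the numerator of $\nu^h(\partial T_{x_i})$ cancels against the $1/F(o,x_i|\la)$-type terms, and the denominator $1-F(x_{i-1},x_i|\la)F(x_i,x_{i-1}|\la)$ cancels against the difference $K(x,x_i|\la)-K(x,x_{i-1}|\la)$ after clearing $F(x_{i-1},x_i|\la)$. This simultaneously proves existence (the transform is $h$) and uniqueness (any $\nu$ with Poisson transform $h$ must satisfy $\nu(\partial T)=h(o)$ by evaluating at $o$, and then the values $\nu(\partial T_x)$ are forced recursively along geodesics by the same telescoping identity, inverting \eqref{eq:int'}).

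The main obstacle I expect is bookkeeping: keeping the $F$-factors, the confluent conventions, and the denominators $1-F(x^-,x|\la)F(x,x^-|\la)$ straight through the telescoping in step (ii), and making sure every division is legitimate — which is precisely why the hypothesis $\la\in\res^*(P)$ (so that $G(x,x|\la)\ne 0$, hence $F(x,y|\la)\ne 0$ and $F(x,y|\la)F(y,x|\la)\ne 1$ by \eqref{eq:nonzero}) is needed, and why $\la=\pm\rho$ in the $\rho$-transient case is admissible by \eqref{eq:aboverho}. There is no deep idea beyond this; the content is that the two explicit formulas are mutually inverse, and the care lies in the non-locally-finite case where the absolute convergence guaranteed by Lemma~\ref{lem:crucial}(c) must be invoked at each infinite-degree vertex.
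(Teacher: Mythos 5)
Your proposal follows the paper's proof essentially step for step: the collapse $\sum_{y:y^-=x}\nu^h(\partial T_y)=\nu^h(\partial T_x)$ via Lemma~\ref{lem:crucial}(b)--(c) together with the factorisation $F(o,y|\la)=F(o,x|\la)F(x,y|\la)$ (the paper packages this as identity \eqref{eq:hidentity}, derived from \eqref{eq:recover}), then the telescoping of \eqref{eq:int'} to recover $h(x)$, and finally the recursive determination of $\nu$ along geodesics for uniqueness (which the paper delegates to \cite{W-Markov}). One small slip to correct: for $x_i\in\pi(o,x)$ one has $K(x,x_i|\la)=F(x,x_i|\la)/F(o,x_i|\la)$, not $1/F(o,x_i|\la)$ --- the numerator $F(x,x_i|\la)$ must be retained for the telescoping identity $\bigl(K(x,x_i|\la)-K(x,x_{i-1}|\la)\bigr)\nu^h(\partial T_{x_i})=F(x,x_i|\la)h(x_i)-F(x,x_{i-1}|\la)h(x_{i-1})$ to come out.
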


\begin{proof} Via analytic continuation \eqref{rmk:an}, the following identities
are a consequence of Lemma \ref{lem:crucial}(c).
\begin{equation}\label{eq:recover}
\begin{aligned}
G(x,x|\la)p(x,y) &= \frac{F(x,y|\la)}{1-F(x,y|\la)F(y,x|\la)}\quad\text{and}\\[.2cm]
\la\, G(x,x|\la) &= 1 + \sum_{y: y \sim x}   
\frac{F(x,y|\la)F(y,x|\la)}{1-F(x,y|\la)F(y,x|\la)}\,,
\end{aligned}
\end{equation}
and when $\deg(x)=\infty$, the last sum converges absolutely.
Indeed, the first identity is part of Lemma \ref{lem:crucial}(c).
The second one then follows from the first one by applying the operator identity
$$
P\, \mathfrak{G}(\la) = \mathfrak{G}(\la)\, P = \la \cdot \mathfrak{G}(\la) - I\,.
$$
We now show first that if $h$ is $\la$-harmonic, then $\nu^h$ as defined in the 
theorem is indeed a complex distribution on $\mathcal{F}_o\,$, and $h$ is its Poisson transform.
We start with the identity
$$
\la\,G(x,x|\la)h(x) = \sum_y G(x,x|\la) p(x,y) h(y)\,,
$$ 
and recall that the sum on the right hand side is assumed to converge absolutely, 
when $\deg(x)=\infty$.
Using \eqref{eq:recover}, we rewrite this as
$$ 
\biggl( 1 + \sum_{y: y \sim x}   \frac{F(x,y|\la)F(y,x|\la)}{1-F(x,y|\la)F(y,x|\la)}\biggr) h(x) =
\sum_{y: y \sim x} \frac{F(x,y|\la)}{1-F(x,y|\la)F(y,x|\la)}\, h(y)\,.
$$
Using that the involved sums converge absolutely, we can regroup the terms and
get 
\begin{equation}\label{eq:hidentity}
h(x) = \sum_{y\,:\,y \sim x} F(x,y|\la)\frac{h(y)-F(y,x|\la)h(x)}{1-F(x,y|\la)F(y,x|\la)}\,.
\end{equation}
Convergence is again absolute when $\deg(x)=\infty\,$. 

For $x = o$, the last identity says that
$\nu^h(\partial T) = \sum_{y \sim o} \nu^h(\partial T_y)$.
Suppose that $x \ne o$. Then by \eqref{eq:hidentity}, 
$$
\begin{aligned}
\sum_{y: y^- = x} \nu^h(\partial T_y) 
&= F(o,x|\la) \sum_{y\,:\,y^- = x} F(x,y|\la) \frac{h(y)-F(y,x|\la)h(x)}{1-F(x,y|\la)F(y,x|\la)} \\[.2cm]
&= F(o,x|\la)
\left(h(x) - F(x,x^-|\la)\frac{h(x^-)-F(x^-,x|\la)h(x)}{1-F(x,x^-|\la)F(x^-,x|\la)}\right)\\[.2cm]
&= F(o,x|\la)\frac{h(x)-F(x,x^-|\la)h(x^-)}{1-F(x,x^-|\la)F(x^-,x|\la)}
=\nu^h(\partial T_x)
\end{aligned}
$$
So $\nu^h$ is indeed a signed distribution on $\mathcal{F}_o\,.$
We verify that $\int_{\partial T} K(x,\xi|\la)\,d\nu^h(\xi) = h(x)$.
For $x = o$ this is true. Let $x \ne o$. With the 
notation of \eqref{eq:int'}, we simplify
$$
\Bigl( K(x,x_i|\la) - K(x,x_{i-1}|\la)\Bigr)\,\nu^h(\partial T_{x_i})
= F(x,x_i|\la)\,h(x_i)-F(x,x_{i-1}|\la)\, h(x_{i-1})\,,
$$
whence we obtain
$$
\begin{aligned}
\int_{\partial T} K(x,\xi|\la)\,d\nu^h(\xi) &= K(x,o|\la)h(o) +
\sum_{i=1}^k \Bigl(F(x,x_i|\la)h(x_j)-F(x,x_{i-1}|\la) h(x_{i-1})\Bigr)\\ 
&= F(x,x|\la)h(x) = h(x)\,,
\end{aligned}
$$
as proposed.

Finally, we need to verify that given $\nu$ and its Poisson transform 
$h$, we have $\nu=\nu^h$. We omit this part of the proof, which is precisely as
in \cite[Thm. 9.37]{W-Markov}, with the only generalisation that $Ph = \la\cdot h$,
not necessarliy having $\la =1$.
\end{proof}

\begin{rmksig}\label{rmk:sigma}
If the distribution $\nu$ on $\mathcal{F}_o$ is non-negative, then it extends uniquely
to a $\sigma$-additive measure on the Borel $\sigma$-algebra on $\partial T$.
This is a consequence of measure theoretic basics, such as the extension theorems
of Caratheodory or Kolmogorov. In the context of boundaries of trees, this fact seems less
understood, so we give an outline. Let $\mathcal{A}$ be 
the product $\sigma$-algebra on $\Ss^{\N}$. It is generated by all cylinder sets
$$
C(\as_1\,,\dots,\as_k) = \{ (\sr_n) \in \Ss^{\N} : \sr_i = \as_i \,,\; i=1,\dots, k \}\,, 
$$
where $k \ge 1$ and $\as_1\,,\dots,\as_k \in \Ss$. We also allow $k=0$ with the
corresponding cylinder set being all of $\Ss^{\N}$.

Now suppose that for each $k \in \N$, we have a probability distribution $p_k$ on 
$\Ss^k$, such that for all $k$ and all $(\sr_1\,,\dots\,,\sr_k) \in \Ss^k$,
\begin{equation}\label{eq:pk}
p_k(\sr_1\,,\dots\,,\sr_k) = \sum_{\sr_{k+1} \in \Ss} p_{k+1}(\sr_1\,,\dots\,,\sr_k\,,\sr_{k+1})\,.
 \end{equation}
Then there is a unique probability measure $\Prob$ on $\Ss^{\N}$, such that for
every cylinder set, 
$$
\Prob\bigl(C(\as_1\,,\dots,\as_k)\bigr) = p_k(\as_1\,,\dots\,,\as_k)\,.
$$
A nice exposition, given in the context of Martin boundary theory, is  
due to {\sc Dynkin}~\cite{Dy}. There are of course various other sources.

How is the relation with trees$\,$? Let $\Ss^* = \bigcup_{k=0}^{\infty} \Ss^k$
be the collection of all words over $\Ss$. For $k=0$, we intend that $\Ss^0$
consists of the \emph{empty word $o$,} and $\Ss^k$ is the collection of all words
of length $k$. We put a tree structure on $T = \Ss^*$, where the root is the empty word
$o$, and neighbourhood is defined in terms of predecessors: for $k \ge 1$,
the predecessor of $x= (\sr_1\,,\dots\,,\sr_k)$ is $x^- = (\sr_1\,,\dots\,,\sr_{k-1})$.
Then $\partial T = \Ss^{\N}$ and $\partial T_x = C(\sr_1\,,\dots,\sr_k)$. 
With this identification, a sequence of probability distributions $p_k$ on 
$\Ss^k$ which satisfy $\eqref{eq:pk}$ corresponds bijectively to a measure $\nu$ 
on $\mathcal{F}_o$ as in \eqref{eq:measure} by
$$
\nu(\partial T_x) = p_k(\sr_1\,,\dots\,,\sr_k)\,,\quad \text{where } \; 
x=  (\sr_1\,,\dots\,,\sr_k) 
$$
If we start with a countable tree $T$ where $\deg(x)=\infty$ for every vertex,
then we can choose a root $o$ and label the successors of any vertex by
the natural numbers. In this way, we identify $T$ with $\N^*$.

A tree which has vertices with finite degree can be embedded into $\N^*$ in
an obvious way, and then $p_k$ will assign value $0$ to sequences $(\sr_1\,,\dots\,,\sr_k)$
which do not correspond to vertices of the tree. 
This explains that for any countable tree, any non-negative 
distribution on $\mathcal{F}_o$ extends to a Borel measure on the boundary of the tree.

\medskip

Finally, we may ask when a complex distribution $\nu$ on $\mathcal{F}_o$ which satisfies \eqref{eq:measure}
extends to a $\sigma$-additive Borel measure on $\partial T$. In the locally finite case,
a necessary and sufficient criterion was given by {\sc Cohen, Colonna and Singman}~\cite{CoCoSi},
which generalises to the non-locally finite case as follows: 
finite total variation of $\nu$ is equivalent with existence of some $M < \infty$ 
such that for any sequence
of pairwise disjoint boundary arcs $\partial T_{x_n}$ (and not only those where all $x_n$ have
the same predecessor), one has 
\begin{equation}\label{eq:M}
\sum_n |\nu(\partial T_{x_n})| \le M \,.
\end{equation}
(In the locally finite case, it is sufficient that any such series converges, and the
upper bound follows.)  
We omit the details.~\hfill$\square$
\end{rmksig}


\section{Invariance under general transitive group actions}\label{sec:invariance}

We now study nearest neighbour random walks on countable trees which are invariant
under a general subgroup $\Gamma$ of the automorphism group of $T$ which acts transitively 
on the vertex set. Invariance means that $p(\gamma x,\gamma y) = p(x,y)$ for all 
$x,y \in T$ and $\gamma \in \Gamma$. 
Let $I = \Gamma \backslash E(T)$ be the set of orbits of $\Gamma$ on the set of oriented
edges of $T$. If $j \in I$ is the orbit \emph{(type)} of $(x,y) \in E(T)$ then we write 
$p_j=p(x,y)$ and
$-j$ for the orbit of $(y,x)$. This is independent of the representative $(x,y)$, 
and defines an involution:
$-(-j) = j$. We have $-j = j$ if and only if there is $\gamma \in \Gamma$ which inverts
$(x,y)$. 
For each $j \in I$ and fixed $x \in T$, we set $d_j = |\{ y \sim x : (x,y) \in I \}|$.
This is independent of $x$ by transitivity of $\Gamma$.
When $-j \ne j$ we may well have  $d_{-j} \ne d_j$. (For example, we may have
$I = \{ \pm 1 \}$ with $d_{-1} = 1$ and $d_{1} = q \in \N$.)
The degree of any vertex $x$ is 
$$
\deg(x) = \sum_{j \in I} d_j\,, \AND  \sum_{j \in I} d_j\,p_j =1 \,.
$$
In particular, each $d_j$ is finite. 
Conversely, if we start with a finite or countable set $I$ with an involution 
$j \mapsto -j$ and a collection $(d_j)_{j \in I}$ of natural numbers, then for 
the regular tree $T$ with degree $\sum_j d_j \le \infty\,$, there is    
a group $\Gamma \le \Aut(T)$ which acts transitively 
such that $I$ is its set
of orbits and the associated cardinalities are $d_j\,$. We postpone
the explanation of this 
and a few other group-theoretic facts to the end of the present section. 
As a specific example, when $d_j=1$ 
for all $j$ then we can take the discrete group
\begin{equation}\label{eq:discgp}
\Gamma = \langle a_j\,, j \in I \mid 
a_j^{-1} = a_{-j} \;\text{for all}\; j \in I \rangle\,.
\end{equation}
When $j \ne -j$ then we can take just one out of $a_j$ and
$a_{-j}$ as a free generator; when $j = -j$ then $a_j$
is a generator whose square is the group identity. In this example, $\Gamma$ acts transitiviely
with trivial stabilisers, and the fact that this provides all possible groups which act 
in this way on a countable tree is a well-known basic part of Bass-Serre Theory, 
see {\sc Serre}~\cite{Se}. 

Here, we shall always assume that the vertex degree is $\ge 3$, so that our random walk
has to be $\rho$-transient by a result of {\sc Guivarc'h}~\cite{Gu}. 
The Green function $G(x,x|\la) = G(\la)$ is again 
independent of $x$, and  if $(x,y)$ is an oriented edge of type $j$, then $G(x,y|\la) = G_j(\la)$ 
depends only on $j$. 
If $G(\la_0) = 0$ for $\la_0 \in \res(P)$ then there must be a unique
$j_0 \in I$ such that 
$$
-j_0=j_0\,,\quad d_{j_0}=1\,,\quad p_{j_0}\,G_{j_0}(\la_0) = -1\,, \AND 
p_j G_{-j}(\la_0)=0 \quad \text{for all}\; j \ne j_0\,.
$$
This follows from the last lines of  Remarks
\ref{rmk:an}. Namely, if we
fix $x \in T$ then there is precisely one $y \sim x$ such that 
$p(x,y)G(y,x|\la_0) = p(y,x)G(x,y|\la_0) = -1$, 
while $p(x,v)G(v,x|\la_0) = p(v,x)G(x,v|\la_0) = 0$ for all other $v \sim x$.

Thus, if $d_j \ge 2$ for all $j$ with $-j = j$, then $G(\la) \ne 0$ for all 
$\la \in \res(P)$. 

\begin{cor}\label{cor:free}
For any nearest neighbour random walk on a finitely or countably generated 
free group, $G(\la) \ne 0$ for all $\la \in \res(P)$, so that $\res^*(P)=\res(P)$. 
\end{cor}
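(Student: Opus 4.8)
The plan is to recognise that this is an immediate special case of the discussion preceding the corollary. First I would recall that for a free group $\Gamma$ on a finite or countable free generating set $\{a_j : j\}$, the Cayley graph $T$ is a tree on which $\Gamma$ acts by left translations, transitively and with trivial (indeed simply transitive) vertex stabilisers; for rank $\ge 2$ the vertex degree is $\ge 3$, so we are squarely in the setting of Section \ref{sec:invariance} with $P$ an arbitrary nearest neighbour random walk invariant under $\Gamma$ (the rank $1$ case $\Gamma = \Z$ being elementary and outside the present setting).

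Next I would read off the edge-orbit data. The oriented edges of $T$ are the pairs $(x, xa)$ with $a \in \{a_j^{\pm 1}\}$, and two such edges lie in the same $\Gamma$-orbit if and only if they correspond to the same $a$. Hence the orbit set $I$ is in bijection with $\{a_j^{\pm 1}\}$, every $d_j = 1$, and the involution $j \mapsto -j$ exchanges the orbit of $a$ with that of $a^{-1}$.

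The crucial point — and essentially the only thing to verify — is that $-j \ne j$ for every $j \in I$. This is exactly where freeness is used: by the characterisation recalled in Section \ref{sec:invariance}, $-j = j$ would require some $\gamma \in \Gamma$ inverting an edge of type $a$, say $\gamma x = xa$ and $\gamma(xa) = x$. Since the action is simply transitive, $\gamma$ is the (unique) left translation determined by $\gamma \cdot e = a$, i.e. $\gamma = a$, and then $x = \gamma(xa) = x a^2$ forces $a^2 = e$; but a free group is torsion-free, so $a_j^{\pm 1}$ has infinite order and $a^2 \ne e$. Thus no orbit type is fixed by the involution, so the hypothesis ``$d_j \ge 2$ for all $j$ with $-j = j$'' of the statement displayed just before the corollary holds vacuously, and therefore $G(\la) \ne 0$ for all $\la \in \res(P)$. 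Since $G(x,x|\la) = G(\la)$ is independent of $x$, the definition of $\res^*(P)$ then gives $\res^*(P) = \res(P)$, as claimed.

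There is no genuine obstacle here; every step is a direct unwinding of definitions together with the torsion-freeness of free groups. The one place to be slightly careful is precisely the translation ``$-j = j$ for an edge type $\Longrightarrow a^2 = e$'', which relies on the simple transitivity of the left-translation action to pin down the inverting automorphism as translation by $a$ itself.
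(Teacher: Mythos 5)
Your argument is correct and is exactly the paper's (implicit) proof: the corollary is stated as an immediate consequence of the preceding observation that a zero of $G(\la)$ on $\res(P)$ forces the existence of an edge type $j_0$ with $-j_0=j_0$ and $d_{j_0}=1$, and you verify, via torsion-freeness of free groups, that no edge type of the Cayley tree is fixed by the involution, so the hypothesis ``$d_j\ge 2$ whenever $-j=j$'' holds vacuously. Your side remark correctly flags the only delicate point (the rank-one case falls outside the degree $\ge 3$ standing assumption of the section), which the paper itself leaves tacit.
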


By reversibility, we have
\begin{equation}\label{eq:reversibility}
p_j\, G_{-j}(\la) = p_{-j}\, G_j(\la)\,,
\end{equation}
and \eqref{eq:reorder} becomes 
\begin{equation}\label{eq:GG}
p_{-j}\, G_j(\la)^2 + G_j(\la) - p_j\, G(\la)^2 = 0\,.  
\end{equation}
When $\la > \rho$ is real, the right one among the two solutions of this equation is
\begin{equation}\label{eq:Gj}
G_j(\la) = \frac{1}{2p_{-j}}\Bigl(\sqrt{1 + 4p_jp_{-j} \,G(\la)^2} - 1\Bigr)\,,
\end{equation} 
because the functions $G(\la)$ and $G_j(\la)$ are decreasing in those $\la$.
In other regions of the plane, there may be the minus sign in front of the root.
We now show that the only possible $\la_0 \in \res(P)$ for which 
$G(\la_0) = 0$ is $\la_0=0$. 

\begin{thm}\label{thm:nonzero}
If $0 \ne \la \in \res(P)$ then $G(\la) \ne 0$, so that $\res(P) \setminus \res^*(P) \subset \{0\}$.
\end{thm}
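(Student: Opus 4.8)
The plan is to prove the contrapositive: assuming $\la_0 \in \res(P)$ with $G(\la_0) = 0$, deduce $\la_0 = 0$. By the discussion preceding the statement — the last lines of Remarks~\ref{rmk:an}, specialised to the present $\Gamma$-invariant situation — there is in this case a unique $j_0 \in I$ with
$$
-j_0 = j_0, \qquad d_{j_0} = 1, \qquad p_{j_0}\,G_{j_0}(\la_0) = -1, \AND p_j\,G_{-j}(\la_0) = 0 \ \text{ for every } j \ne j_0,
$$
so that the oriented edges of type $j_0$ form a perfect matching of $T$. The whole argument will be local: since $\la_0 \in \res(P)$, each $G(x,y|\cdot)$, hence $G(\cdot)$ and every $G_j(\cdot)$, is analytic on a disc $D \subset \res(P)$ about $\la_0$, and we shrink $D$ so that $|G(\la)| < 1/4$ on $D$ and so that $\la_0$ is the only zero in $D$ of the function $G$ (which is not identically zero, being nonzero for large $|\la|$).

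Put $c_j = p_j\,p_{-j}$ and $g_j(\la) = p_j\,G_{-j}(\la)$. From \eqref{eq:GG} (with $j$ replaced by $-j$, then multiplied by $p_j$), together with the reversibility relation \eqref{eq:reversibility}, one obtains the scalar identity
$$
g_j(\la)\,\bigl(g_j(\la)+1\bigr) = c_j\,G(\la)^2 \qquad \text{on } D,
$$
and in particular $h := g_{j_0} = p_{j_0}\,G_{j_0}$ satisfies $h(h+1) = p_{j_0}^2\,G^2$ (using $-j_0 = j_0$, so $c_{j_0} = p_{j_0}^2$). The two roots $\frac{1}{2}\bigl(-1 \pm \sqrt{1+4c_j\,G(\la)^2}\,\bigr)$ are single-valued analytic functions on $D$ (the radicand stays within distance $1$ of $1$), one of modulus at most $2c_j\,|G(\la)|^2$ and the other of modulus larger than $1/2$. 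Since $D$ is connected and $g_j(\la_0) = 0$ for $j \ne j_0$ whereas $h(\la_0) = -1$, continuity pins down the branches: $g_j$ is the small root throughout $D$ for $j \ne j_0$, and $h$ is the large root. Consequently, on all of $D$,
$$
|g_j(\la)| \le 2c_j\,|G(\la)|^2 \ \ (j \ne j_0), \AND \bigl|\,1 + h(\la)\,\bigr| \le 2\,p_{j_0}^2\,|G(\la)|^2 .
$$

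Next I would invoke the resolvent identity $P\,G(\cdot,o|\la) = \la\,G(\cdot,o|\la) - \de_o$, valid for $\la \in \res(P)$. Evaluated at the vertex $o$, and with the neighbours of $o$ grouped according to the orbit type of the oriented edge from $o$, it reads $\sum_{j \in I} d_j\,p_j\,G_{-j}(\la) = \la\,G(\la) - 1$, the series converging absolutely. Separating off the $j_0$-summand — where $d_{j_0} = 1$ and $-j_0 = j_0$, so $d_{j_0}\,p_{j_0}\,G_{-j_0} = h$ — turns this into
$$
\la\,G(\la) = \bigl(\,1 + h(\la)\,\bigr) + \sum_{j \ne j_0} d_j\,g_j(\la), \qquad \la \in D.
$$
Taking moduli, using the two displayed bounds and $\sum_j d_j\,c_j = \sum_j d_j\,p_j\,p_{-j} \le \sum_j d_j\,p_j = 1$, yields $|\la|\,|G(\la)| \le 2\,p_{j_0}^2\,|G(\la)|^2 + 2\,|G(\la)|^2 \le 4\,|G(\la)|^2$ for all $\la \in D$. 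For $\la \in D \setminus \{\la_0\}$ one has $G(\la) \ne 0$, hence $|\la| \le 4\,|G(\la)|$; letting $\la \to \la_0$ and using $G(\la_0) = 0$ gives $|\la_0| \le 0$, i.e. $\la_0 = 0$.

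The step I expect to be the main obstacle is the cancellation concealed in the last display. Evaluating it directly at $\la_0$ produces only the vacuous $0 = 0$, because $G(\la_0) = 0$; the content is that the right-hand side is in fact $O(|G(\la)|^2)$, i.e. that $g_j$ and $1 + h$ vanish to \emph{second} order in $G$ at $\la_0$. Determining the correct analytic branches — hence this quadratic vanishing — is exactly the place where the structural input $d_{j_0} = 1$, $-j_0 = j_0$, $p_{j_0}G_{j_0}(\la_0) = -1$ is used in an essential way; if instead every $j$ with $-j = j$ satisfied $d_j \ge 2$, no such $j_0$ would exist and the argument could not even start, in accordance with the remark immediately preceding Corollary~\ref{cor:free}. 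A secondary, purely technical point, relevant only when $T$ is not locally finite, is the uniform control of the infinite series $\sum_{j \ne j_0} d_j\,g_j(\la)$ near $\la_0$; this is taken care of cleanly by the normalisation $\sum_j d_j\,p_j = 1$.
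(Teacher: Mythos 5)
Your proposal is correct and follows essentially the same route as the paper: identify the exceptional orbit $j_0$ from the Remarks on analytic continuation, pin down the analytic branches of the quadratic \eqref{eq:GG} on a punctured neighbourhood of $\la_0$ to get the quadratic-in-$G$ bounds $|G_j|=O(|G|^2)$ for $j\ne j_0$ and $|1+p_{j_0}G_{j_0}|=O(|G|^2)$, insert these into the one-step identity $\la G(\la)-1=\sum_j d_j p_j G_{-j}(\la)$, and let $\la\to\la_0$. The only (immaterial) difference is that the paper divides by $G(\la)$ first and works with $F_j=G_j/G$, whereas you keep the identity multiplied by $G$ and divide at the end.
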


\begin{proof}
We modify and extend an argument of \cite[p. 17]{FiSt}.  
Suppose that $G(\la_0) = 0$ for
some $\la_0 \in \res(P)$. By analyticity, there is an open ball $B$ centred
at $\la _0$ with closure
$B^- \subset \res(P)$ such that $G(\la) \ne 0$ for $\la \in B^- \setminus \{\la_0\}$.
By continuity, we may also choose $B$ such that $2|G(\la)| < 1$ for all $\la \in B^-$.
 
We know from \eqref{rmk:an} that \eqref{eq:nonzero} holds for those $\la$, and 
as we have observed above, there is precisely one $j_0 \in I$ such that 
$p_{-j_0}\, G_{j_0}(\la_0) = -1$, andby \eqref{eq:reversibility} we must have $-j_0=j_0$ and $d_{j_0} = 1$. 

For all $j \in I_0 = I \setminus\{j_0\}$,
we have $G_j(\la_0)=0$, so that the  
right solution of \eqref{eq:GG} at $\la_0$ is the one given by \eqref{eq:Gj}.
By continuity, this must also hold in a neighbourhood of $\la_0\,$. On the other 
hand, by the choice of $B$, the right hand side of \eqref{eq:GG} is analytic in 
$B$, so that by analytic continuation, \eqref{eq:GG} must hold for $G_j(\la)$ in all of
$B$. Analogously, 
$$
G_{j_0}(\la) =  \frac{1}{2p_{-j}}\Bigl(-\sqrt{1 + 4p_jp_{-j} \,G(\la)^2} - 1\Bigr) 
\quad \text{for }\;\la \in  B\,.
$$
We now note that for complex $t$ with $|t| < 1$, one has $\big| \sqrt{1+t} -1 \big| < |t|$.
Thus, for $\la  \in B$ and $j \in I_0$, \eqref{eq:Gj} gives
$|G_j(\la)| \le 2p_j\,|G(\la)|^2$. When in addition $\la \ne \la_0$, 
setting $F_j(\la) = G_j(\la)/G(\la)$, we can transform the identity of Lemma 
\ref{lem:crucial}(b) into
$$
\la = \frac{G(\la)}{G_{j_0}(\la)} + \sum_{j \in I_0} d_{-j}p_{-j}F_j(\la)\,.
$$  
By the above, for $\la \in B$,
$$
\sum_{j \in I_0} d_{-j}p_{-j}|F_j(\la)| \le 2|G(\la)| \sum_{j \in I_0} d_{-j}p_{-j}p_j 
< 2|G(\la)|\,,
$$
since $d_{-j}p_{-j} \le 1$. Thus, when $\la \to \la_0$, the sum tends to $0$,
and we also know that $G(\la)/G_{j_0}(\la) \to 0$.
Therefore $\la_0=0$, which we have excluded.
\end{proof}

\begin{cor}\label{cor:gr}
In case of group-invariance under a transitive automorphism group of $T$,
the integral representation of Theorem \ref{thm:general} holds for
all eigenvalues $\la \in \res(P)$ with the only possible exception of $\la=0$. 
\end{cor}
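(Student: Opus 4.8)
The plan is to deduce the corollary directly from Theorem \ref{thm:nonzero} and Theorem \ref{thm:general}; the only point that needs a word is the reduction of the defining condition of $\res^*(P)$ to a single scalar condition. First I would recall that, under the standing hypotheses of this section (invariance under a transitive $\Gamma \le \Aut(T)$, vertex degree $\ge 3$), the on-diagonal Green function $G(x,x|\la) = G(\la)$ does not depend on $x$. Consequently
$$
\res^*(P) = \{ \la \in \res(P) : G(x,x|\la) \ne 0 \;\text{for all}\; x \} = \{ \la \in \res(P) : G(\la) \ne 0 \}\,.
$$

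Next I would invoke Theorem \ref{thm:nonzero}, which asserts precisely that $G(\la) \ne 0$ for every $\la \in \res(P)$ with $\la \ne 0$. Combined with the previous display, this gives $\res(P) \setminus \{0\} \subseteq \res^*(P)$. Then Theorem \ref{thm:general} applies verbatim: for every $\la \in \res^*(P)$ — in particular for every $\la \in \res(P)$ with $\la \ne 0$ — a function $h: T \to \C$ satisfies $Ph = \la \cdot h$ if and only if it is the Poisson transform of a complex distribution $\nu$ on $\mathcal{F}_o$, and that distribution is uniquely determined by $h$ via the formulas of Theorem \ref{thm:general}. The boundary values $\la = \pm\rho$ require no separate discussion: the degree hypothesis forces $\rho$-transience by the result of {\sc Guivarc'h}~\cite{Gu}, whence $G(\pm\rho)$ is finite and nonzero, so $\pm\rho \in \res^*(P)$; in any case they are covered directly by the ``$\la = \pm\rho$ in the $\rho$-transient case'' clause of Theorem \ref{thm:general}.

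I do not expect any genuine obstacle, since all the substantive work has already been carried out in Theorem \ref{thm:nonzero}. The only things to be careful about are that the condition ``$G(x,x|\la) \ne 0$ for all $x$'' really does collapse to the scalar statement $G(\la) \ne 0$ — immediate from transitivity of $\Gamma$ — and that the exclusion of $\la = 0$ is genuinely necessary, the degenerate case $G(0) = 0$ being possible as discussed in Remarks \ref{rmk:an}. Hence the corollary follows without further computation.
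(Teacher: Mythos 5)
Your proof is correct and is exactly the deduction the paper intends: the corollary is an immediate combination of Theorem \ref{thm:nonzero} (which gives $\res(P)\setminus\{0\}\subseteq\res^*(P)$ once one notes that transitivity makes $G(x,x|\la)=G(\la)$ independent of $x$) with Theorem \ref{thm:general}. One small correction to your aside on the boundary values: since $P$ is self-adjoint with norm $\rho$ and closed, symmetric spectrum, the points $\pm\rho$ lie in $\spec(P)$, so they are \emph{not} in $\res(P)$ and a fortiori not in $\res^*(P)$ --- the finiteness of $G(\pm\rho)$ in the $\rho$-transient case does not place them in the resolvent set; they are simply outside the scope of the corollary (and are handled, if one wants them, only by the separate ``$\la=\pm\rho$ in the $\rho$-transient case'' clause of Theorem \ref{thm:general}).
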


We remark that it may well happen that $0$ 
is part of the resolvent set of $P$ for certain choices of the probabilities 
$p_j\,$, see \cite{FiSt}.

\begin{rmkgp}\label{rmks:gp}
We now add some details on group actions. For any tree (or graph), the automorphism 
group $\Aut(T)$ of all neighbourhood preserving bijections of the vertex set is
equipped with the topology of point-wise convergence. If $T$ is locally finite,
then the vertex stabilisers $\{ \gamma \in \Aut(T) : \gamma x = x \}$ for $x \in T$
are open-compact, so that $\Aut(T)$ is a locally compact, totally disconnected 
group. When $T$ is not locally finite, this is more delicate. However, given
the transition operator, resp. matrix $P= \bigl(p(x,y)\bigr)_{x,y \in T}\,$,
we are interested in the group 
$$
\Aut(T,P) = \{ \gamma \in \Aut(T) : p(\gamma x,\gamma y) =  p(x,y) \;\text{ for all }\; x,y \}\,.
$$
When $\Aut(T,P)$ acts transitively (even when $T$ is not locally finite), it is closed in the topology
of point-wise convergence and acts with open-compact vertex stabilisers whose orbits are all
finite, see {\sc Kaimanovich and Woess}~\cite[Prop. 2.7]{KaWo}. 
Hence, if $\Gamma$ is a group as assumed at the beginning of this section, 
we can pass to its closure in $\Aut(T,P)$, or rather assume right away without
loss of generality that $\Gamma = \Aut(T,P)$.

We now address the converse question: we start with a finite or countable set $I$ with an involution 
$j \mapsto -j$ and a collection $(d_j)_{j \in I}$ of natural numbers and look for a
group $\Gamma \le \Aut(T)$ which acts transitively and realises $I$ as the set
of orbits and the associated cardinalities $d_j\,$. 

To begin, we fix $j$ together with $d_j$ and $d_{-j}\,$. 

Let us first consider the case when under the given involution, $-j \ne j$. 
Then we consider the regular tree $T_j = T(d_j,d_{-j})$ with degree $d_j+d_{-j}\,$.
We orient its edges such that each vertex has $d_{-j}$ ingoing and $d_j$ outgoing
edges. All edges are given the ``colour'' $j$.
Then we consider the group $\Gamma_j$ of all automorphisms of $T_j$
which preserve the orientation. This is a closed subgroup of $\Aut(T_j)$ which
acts transitively, and the stabiliser of any vertex $x$ has two sub-orbits
on the neighbours of $x$, namely the forward and the backward neighbours.

Next, consider the special case when $j = - j$ and thus $d_j = d_{-j}$.
Then we take $T_j$ to be the regular tree with degree $d_j$ and $\Gamma_j = \Aut(T_j)$,
its full automorphsim group. (Note that when $d_j=d_{-j}=1$, the tree $T_j$ consists of
two vertices connected by one edge.) The stabiliser of any vertex acts transitively
on its neighbours, which is what we need.

Now, we consider the reduced index set $I'$, where for each $j \in I$ with $-j \ne j$,
we keep only one of $j$ and $-j$ and eliminate the other one. At this point, we
consider the \emph{free product graph} 
$$
T = \freeprod_{j \in I'} T_j\,.
$$
For this purpose, choose a root $o_j$ in each tree $T_j$ and set $T_j' = T_j \setminus \{o_j\}$.
The vertex set of $T$ then consists of all \emph{alternating words} over
the set $\bigcup_{j\in I'} T_j'\,$, that is all finite sequences 
\begin{equation}\label{eq:free}
x_1x_2\cdots x_n\,, \quad\text{where}\; n \ge 0\; \text{and}\; x_k \in T_{j(k)}'\;
\text{for some}\;j(k) \in I'\; \text{with}\;j(k+1) \ne j(k).
\end{equation}
For $n=0$, this is the empty word $o$. The edges of $T$ are as follows for all $n \ge 1$.

\begin{enumerate}
\item[(a)] A vertex $x_1\cdots x_{n-1}x_n$ as above is connected by an edge with colour
$j(n)$ to  $x_1\cdots x_{n-1}y_n$ if and only if there is an edge from $x_n$ to $y_n$ in 
$T_{j(n)}$.

\item[(b)] A vertex $x_1\cdots x_{n-1}$ is connected by an edge with colour
$j(n)$ to $x_1 \cdots x_{n-1}x_n$ if and only if there is an edge from $o_{j(n)}$
to $x_n$ in $T_{j(n)}$.
\end{enumerate}
In both cases, the resulting edges inherit their eventual orientation from $T_j\,$.

Thus, $T$ is a tree which contains countably many copies of each $T_j\,$: any
element $x_1\cdots x_{n-1}$ with $j(n-1) \ne j$ together with the collection of
all $x_1 \cdots x_{n-1}x_n$ with $x_n \in T_j'$ spans such a copy, in which the
root of $T_j$ corresponds to $x_1\cdots x_{n-1}\,$.

Each $\Gamma_j$ embeds into $\Aut(T)$ as follows. 
Let $\gamma \in \Gamma_j$ and $x_1\cdots x_n \in T$ as in \eqref{eq:free}.

\begin{itemize}
\item If $j(1) \ne j$ and $\gamma o_j = o_j$ then $\gamma(x_1\cdots x_n) = x_1\cdots x_n\,$,
in particular $\gamma o = o$ in $T$.

\item If $j(1) \ne j$ and $\gamma o_j \ne o_j$ then $\gamma(x_1\cdots x_n) = 
(\gamma o_j) x_1\cdots x_n\,$.

\item If $j(1) = j$ and $\gamma x_1 = o_j$ then $\gamma(x_1\cdots x_n) = x_2\cdots x_n\,$.

\item If $j(1) = j$ and $\gamma x_1 \ne o_j$ then 
$\gamma(x_1\cdots x_n) = (\gamma x_1)x_2\cdots x_n\,$.
\end{itemize}

The subgroup of $\Aut(T)$ generated by $\bigcup_{j \in I'} \Gamma_j$  preserves the colours
and the eventual orientations of the edges. It acts transitively and has
all the required properties.\hfill$\square$
\end{rmkgp}

\section{The integral representation of $\la$-polyharmonic functions}\label{sec:poly}

We return to the general setting of \S \ref{sec:basic}. A \emph{$\la$-polyharmonic function
of order $n \ge 1$} is a function $f: T \to \C$ such that 
$$
(\la\,I-P)^n\, f \equiv 0\,.
$$
If $n=1$, this means that $f$ is $\la$-harmonic. If $n=2$ this means that $\la\cdot f-Pf$
is $\la$-harmonic, and so on. 

In the setting of the classical Laplacian $\Delta$ on
a Euclidean domain, polyharmonic functions are functions for which $\Delta^n h \equiv 0$.
Their study goes back to work in the $19^{\text{th}}$ century, see e.g. 
{\sc Almansi}~\cite{Al}. A basic reference is the monograph by {\sc Aronszajn, 
Creese and Lipkin}~\cite{ACL}, and there is ongoing study.

The discrete analogue of polyharmonic functions on trees ($\la = 1$)
was studied in a long paper by {\sc Cohen et al.}~\cite{CCGS}. For the case of simple random
walk on a homogeneous tree, they provide a boundary integral representation for
polyharmonic functions. Here, we shall explain how this can by achieved more directly 
for arbitrary $\la$-polyharmonic functions in the general setting of a nearest neighbour 
random walk on a countable tree $T$ (locally finite or not), whenever  
$\la \in \res^*(P)$. 
We know from Lemma \ref{lem:crucial} and the observation of \eqref{rmk:an}  
that all $\la \in \C \setminus \{\pm \rho\}$ 
with $|\la| \ge \rho$ belong to $\res^*(P)$,  
and we also know from Theorem \ref{thm:nonzero}  
that in the group-invariant case $\res^*(P)$ differs from the resolvent set 
$\res(P)$ at most by $\{ 0\}$.

The key to our integral representation is the following proposition, 
where for $x \in T$ and $\xi \in \partial T$,
we denote by $K^{(r)}(x,\xi|\la)$ the $r^{\textrm{th}}$ derivative of  the function 
$\la \mapsto K(x,\xi|\la)$. Differentiability of that function follows from
\eqref{eq:Mkernel}, i.e., the fact that it is the quotient of the Green functions at vertices of 
$T$ -- a property which 
is specific to the nearest neighbour case.

\begin{pro}\label{pro:derive} For $\la \in \res^*(P)$,
$|\la| > \rho$, 
$$
(\la\, I - P) K^{(r)}(\cdot,\xi|\la) = r\,  K^{(r-1)}(\cdot,\xi|\la)\,.
$$
Therefore, setting
$$
h(x|\la) = \int_{\partial T} K(\cdot,\xi|\la) \,d\nu(\xi)\,, 
$$
its $r^{\text{\rm th}}$ derivative 
$h^{(r)}(x|\la)$ with respect to $\la$ satisfies
$$
(\la\, I - P) h^{(r)}(\cdot\,|\la) = r\,  h^{(r-1)}(\cdot\,|\la)\,.
$$
\end{pro}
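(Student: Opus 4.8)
The plan is to start from the $\la$-harmonicity of the Martin kernel \eqref{eq:eigen}, namely $(\la\,I-P)K(\cdot,\xi|\la)=0$, and differentiate it $r$ times with respect to $\la$. Writing this identity pointwise at a vertex $x$, we have for $|\la|>\rho$ an absolutely convergent expression $\la\,K(x,\xi|\la)-\sum_{y\sim x}p(x,y)K(y,\xi|\la)=0$, in which the left-hand side is an analytic function of $\la$ in the region $\{|\la|>\rho\}\cap\res^*(P)$ by \eqref{eq:Mkernel}, since each $K(\cdot,\xi|\la)$ is a quotient of Green functions that are analytic there and whose denominators do not vanish. Applying $\frac{d^r}{d\la^r}$ and the Leibniz rule to the term $\la\,K(x,\xi|\la)$ produces $\la\,K^{(r)}(x,\xi|\la)+r\,K^{(r-1)}(x,\xi|\la)$, while each term $p(x,y)K(y,\xi|\la)$ simply becomes $p(x,y)K^{(r)}(y,\xi|\la)$. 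Rearranging gives exactly $(\la\,I-P)K^{(r)}(\cdot,\xi|\la)=r\,K^{(r-1)}(\cdot,\xi|\la)$, which is the first assertion. By induction on $r$ this also shows that each $K^{(r)}(\cdot,\xi|\la)$ is $\la$-polyharmonic of order $r+1$.

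The main technical obstacle is justifying term-by-term differentiation under the (possibly infinite) sum $\sum_{y\sim x}p(x,y)K(y,\xi|\la)$ when $\deg(x)=\infty$, so that the differentiated identity really holds pointwise. The plan is to handle this via \eqref{eq:PKe}: for a fixed $v$ on the ray $\pi(o,\xi)$ with $v^-=x\wedge\xi$, one has $K(x,\xi|\la)=\mathfrak{G}(\la)\uno_v(x)\big/G(o,v|\la)$, and $\la\mapsto\mathfrak{G}(\la)$ is an analytic operator-valued function on the resolvent set, so $\la\mapsto K(\cdot,\xi|\la)$ is analytic as an $\ell^2(T,\mm)$-valued function on $\res(P)$, with $K^{(r)}(\cdot,\xi|\la)=r!\,\mathfrak{G}(\la)^{r+1}\uno_v\big/G(o,v|\la)$. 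Since $P$ is bounded on $\ell^2(T,\mm)$, applying $P$ commutes with the $\la$-derivative, and the operator identity $P\mathfrak{G}(\la)=\la\,\mathfrak{G}(\la)-I$ differentiated $r$ times yields $P\mathfrak{G}(\la)^{(r)}$-type relations that give the claim in $\ell^2$, hence pointwise. One then notes that for $|\la|>\rho$ the relevant sums of absolute values are controlled by $U\bigl(x,x\big|\,|\la|\bigr)<|\la|$ as in Lemma \ref{lem:crucial}(c), so dominated convergence legitimises differentiating the scalar series directly as well; either route works.

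For the second assertion, set $h(x|\la)=\int_{\partial T}K(\cdot,\xi|\la)\,d\nu(\xi)$, which by Proposition \ref{pro:transf} is a finite sum $h(x|\la)=K(x,o|\la)\,\nu(\partial T)+\sum_{i=1}^k\bigl(K(x,x_i|\la)-K(x,x_{i-1}|\la)\bigr)\nu(\partial T_{x_i})$ along the finite path $\pi(o,x)$; in particular, for each fixed $x$ it is a finite $\C$-linear combination of the analytic functions $\la\mapsto K(x,x_i|\la)$ with coefficients not depending on $\la$, so it is itself analytic in $\la$ on $\{|\la|>\rho\}\cap\res^*(P)$, and $h^{(r)}(x|\la)=\int_{\partial T}K^{(r)}(\cdot,\xi|\la)\,d\nu(\xi)$ by differentiating that finite sum termwise. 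Applying $P$ to $h^{(r)}(\cdot\,|\la)$: again $Ph^{(r)}(x|\la)$ is a finite sum of expressions $\sum_y p(x,y)K^{(r)}(y,x_i|\la)$, each of which by the first part and by linearity equals $\la\,K^{(r)}(x,x_i|\la)-r\,K^{(r-1)}(x,x_i|\la)$ plus the contribution corresponding to the eigenvalue equation; collecting terms and using the first assertion inside each summand gives $(\la\,I-P)h^{(r)}(\cdot\,|\la)=r\,h^{(r-1)}(\cdot\,|\la)$. The only care needed is that the interchange of $P$ with the finite $\xi$-sum and with the $\la$-derivative is legitimate, which follows from the absolute convergence in \eqref{eq:measure} together with the pointwise bound on $K^{(r)}$ from the first part, exactly as the fact $Ph=\la\cdot h$ was deduced from \eqref{eq:int'} after Proposition \ref{pro:transf}.
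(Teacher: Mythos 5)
Your argument follows the paper's own route: differentiate the eigenvalue identity \eqref{eq:eigen} $r$ times in $\la$, justify the interchange of $P$ with $d/d\la$ through the resolvent representation \eqref{eq:PKe} and the analyticity of $\la\mapsto\mathfrak{G}(\la)$ (Lemma \ref{lem:resolvent}), and reduce the second assertion to the fact that \eqref{eq:int'} writes the Poisson transform as a finite linear combination of the kernels $K(x,x_i|\la)$. Two points need correction. First, the formula $K^{(r)}(\cdot,\xi|\la)=r!\,\mathfrak{G}(\la)^{r+1}\uno_v\big/G(o,v|\la)$ is not right: the denominator $G(o,v|\la)$ also depends on $\la$, so by the Leibniz rule $K^{(r)}$ contains terms with derivatives of $1/G(o,v|\la)$, and in any case $\mathfrak{G}^{(r)}(\la)=(-1)^r r!\,\mathfrak{G}(\la)^{r+1}$ carries a sign you dropped. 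This slip is not load-bearing: all you actually need, and do state, is that $\la\mapsto K(\cdot,\xi|\la)$ is analytic as an $\ell^2(T,\mm)$-valued function (a quotient of an analytic vector-valued function by a nonvanishing scalar analytic function) and that the bounded operator $P$ commutes with the $\la$-derivative, which settles the non-locally finite case.

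Second, your own computation, carried out consistently, gives $\la\,K^{(r)}+r\,K^{(r-1)}-PK^{(r)}=0$, i.e.\ $(\la\,I-P)K^{(r)}(\cdot,\xi|\la)=-r\,K^{(r-1)}(\cdot,\xi|\la)$; the ``rearranging'' that produces $+r$ is a sign error. The same sign appears in the printed statement, and it is inconsistent with Lemma \ref{lem:resolvent}, which yields $(\la\,I-P)\mathfrak{G}^{(r)}(\la)=-r\,\mathfrak{G}^{(r-1)}(\la)$. The discrepancy is harmless for the later representation theorems (it only changes constants by factors $(-1)^r$, which can be absorbed into the distributions), but you should record the identity with the correct sign, for instance as $(P-\la\,I)K^{(r)}(\cdot,\xi|\la)=r\,K^{(r-1)}(\cdot,\xi|\la)$, rather than presenting an algebraic step that contradicts the line preceding it.
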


\begin{proof}
Given $x$ and $\xi$, let $c =  x \wedge \xi$. We can write 
$K(x,\xi|\la) = G(x,c|\la)/G(o,c|\la)$, which is legitimate since
our $\la$ is such that the denominator does not vanish. 
When $T$ is locally finite, it is obvious that in the identity
$$
P\, K(x,\xi|\la) = \la\, K(x,\xi|\la)\,,
$$
we may exchange the derivatives with respect to $\la$ with the application of 
$P$, even when $T$ is not locally finite. The fact that this is also true in the
non-locally finite case follows from Lemma \ref{lem:resolvent} 
below, together with \eqref{eq:PKe}. 

Next, formula \eqref{eq:int'} of Proposition \ref{pro:transf}  
shows that $\; \int_{\partial T} K(x,\xi|\la) \,d\nu(\xi)\;$ 
is in fact a finite linear combination of functions $K(x,x_i|\la)$, so that the second 
statement is immediate.
\end{proof}

Let us now clarify why derivation and summation in \eqref{eq:PKe} can be exchanged even
in the non-locally finite case. We recall (without proof) some basic facts which are part of the 
functional calculus for the resolvent
of a bounded self-adjoint operator on a Hilbert space within our context; see e.g.
{\sc Kato}~\cite{Ka}, {\sc Yosida}~\cite[Ch. VIII]{Yo},  or various other textbooks.

\begin{lem}\label{lem:resolvent} For $\la \in \res(P)$,
we have the following for the resolvent operator $\mathfrak{G}(\la)= (\la\,I - P)^{-1}$
on $\ell^2(T,\mm)$.

The operator-valued mapping $\la \mapsto \mathfrak{G}(\la)$ is analytic, and its
$r^{\textrm{th}}$ derivative $\mathfrak{G}^{(r)}(\la)$ with respect to $\la$ satisfies
$$
\begin{aligned}
\mathfrak{G}^{(r)}(\la) &= (-1)^r\, r! \, \bigl(\mathfrak{G}(\la)\bigr)^{r+1} 
\; \text{(operator power of order $r+1$),\; and}\\
P\, \mathfrak{G}^{(r)}(\la) &= \frac{d^r}{d\la^r} \bigl(P\, \mathfrak{G}(\la)\bigr)
= r \cdot \mathfrak{G}^{(r-1)}(\la) + \la\cdot \mathfrak{G}^{(r)}(\la)\,,\quad  r \ge 1.
\end{aligned}
$$
\end{lem}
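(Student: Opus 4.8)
The statement to prove is Lemma \ref{lem:resolvent}, which records standard facts from the functional calculus of the resolvent of a bounded self-adjoint operator. Here is how I would organize a proof.

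\medskip

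The plan is to treat analyticity and the two displayed formulas in sequence, all via the resolvent identity together with Neumann-series estimates. First I would recall that for $\la_0 \in \res(P)$, the operator norm of $\mathfrak{G}(\la_0)$ is $\dist(\la_0, \spec(P))^{-1}$ (or at least bounded below by it), so for $|\la - \la_0|$ small enough the Neumann series $\sum_{k\ge 0}(\la_0 - \la)^k \mathfrak{G}(\la_0)^{k+1}$ converges in operator norm. One checks directly that its sum $S$ satisfies $(\la I - P)S = I = S(\la I - P)$, using $(\la I - P) = (\la_0 I - P) - (\la_0 - \la)I$ and telescoping; hence $S = \mathfrak{G}(\la)$. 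This exhibits $\la \mapsto \mathfrak{G}(\la)$ as a norm-convergent power series near each point of $\res(P)$, so it is analytic, and term-by-term differentiation of the Neumann series at $\la = \la_0$ gives $\mathfrak{G}^{(r)}(\la_0) = r!\,\mathfrak{G}(\la_0)^{r+1}$. (The sign $(-1)^r$ in the statement would instead appear if one expanded in powers of $(\la - \la_0)$; one should double-check the sign convention and adjust, but this is a one-line bookkeeping matter.) Alternatively, and more cleanly, I would prove $\mathfrak{G}'(\la) = -\mathfrak{G}(\la)^2$ from the first resolvent identity
$$
\mathfrak{G}(\la) - \mathfrak{G}(\mu) = -(\la - \mu)\,\mathfrak{G}(\la)\mathfrak{G}(\mu),
$$
dividing by $\la - \mu$ and letting $\mu \to \la$ (using norm-continuity of $\mathfrak{G}$, which follows from the same identity), and then induct: differentiating $\mathfrak{G}^{(r)}(\la) = (-1)^r r!\,\mathfrak{G}(\la)^{r+1}$ using the product rule for operator-valued analytic functions and $\mathfrak{G}'=-\mathfrak{G}^2$ yields the case $r+1$.

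\medskip

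For the second formula, I would start from the operator identity $P\,\mathfrak{G}(\la) = \la\,\mathfrak{G}(\la) - I$, valid for all $\la \in \res(P)$ (it is just $(\la I - P)^{-1}$ composed appropriately, and is already used in the paper, e.g. around \eqref{eq:PKe} and \eqref{eq:recover}). Since both sides are analytic operator-valued functions on $\res(P)$, I may differentiate $r$ times. The left side gives $P\,\mathfrak{G}^{(r)}(\la)$ because $P$ is a fixed bounded operator, hence commutes with $\la$-differentiation (this is exactly the exchange-of-limits point the lemma is designed to justify: $\|P(\mathfrak{G}(\la+h) - \mathfrak{G}(\la))/h - P\mathfrak{G}'(\la)\| \le \|P\|\cdot\|(\mathfrak{G}(\la+h)-\mathfrak{G}(\la))/h - \mathfrak{G}'(\la)\| \to 0$). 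The right side, by the Leibniz rule applied to $\la \cdot \mathfrak{G}(\la)$, gives $r\,\mathfrak{G}^{(r-1)}(\la) + \la\,\mathfrak{G}^{(r)}(\la)$, since only the first two terms of the Leibniz expansion survive ($\la$ has vanishing second derivative), and the derivative of the constant $-I$ is $0$. This is precisely the claimed recursion.

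\medskip

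I do not anticipate a serious obstacle here, since everything reduces to the resolvent identity plus the observation that a fixed bounded operator commutes with differentiation in the parameter; the only things to be careful about are (i) the sign convention in $\mathfrak{G}^{(r)}$, which depends on whether one expands in $(\la_0 - \la)$ or $(\la - \la_0)$, and (ii) making sure that ``analytic'' is understood in the norm topology so that all the term-by-term manipulations and the Leibniz rule are legitimate — both are routine. Since the paper explicitly says these facts are recalled ``without proof'' with references to \cite{Ka} and \cite[Ch. VIII]{Yo}, the honest write-up is simply to cite those sources for analyticity and $\mathfrak{G}^{(r)} = (-1)^r r!\,\mathfrak{G}^{r+1}$, and then to give the one-paragraph derivation of the second formula by differentiating $P\,\mathfrak{G}(\la) = \la\,\mathfrak{G}(\la) - I$ as above, which is the only part actually used in the proof of Proposition \ref{pro:derive}.
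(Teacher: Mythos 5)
Your proposal is correct and follows essentially the same route the paper indicates: the paper recalls these facts without proof, pointing to the resolvent equation $\mathfrak{G}(\la_1)-\mathfrak{G}(\la_2)=(\la_2-\la_1)\mathfrak{G}(\la_1)\mathfrak{G}(\la_2)$ for the case $r=1$ followed by induction on $r$, which is exactly your "cleaner" alternative, and the second displayed identity is obtained by differentiating $P\,\mathfrak{G}(\la)=\la\,\mathfrak{G}(\la)-I$ via Leibniz as you do. The only blemish is the sign slip in your Neumann-series computation (the series $\sum_k(\la_0-\la)^k\mathfrak{G}(\la_0)^{k+1}$ already yields $(-1)^r r!\,\mathfrak{G}(\la_0)^{r+1}$ upon $r$-fold differentiation in $\la$), but you flag it yourself and your resolvent-identity induction gives the correct sign.
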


\noindent
For $r=1$, the first formula may be seen as a consequence of the \emph{resolvent equation}
$$
\mathfrak{G}(\la_1) - \mathfrak{G}(\la_2) 
= (\la_2-\la_1)\cdot \mathfrak{G}(\la_1)\mathfrak{G}(\la_2)\,.
$$
Subsequently, one can proceed by induction on $r$. Reading things element-wise, we have for example 
$$
\begin{aligned}
G'(x,y|\la) &= - \sum_v G(x,v|\la)G(v,y|\la) \AND\\ G''(x,y|\la) &= 
2 \sum_{v, w} G(x,v|\la)G(v,w|\la)G(w,y|\la)
\end{aligned}
$$
as well as
$$
\sum_v p(x,v)\, G'(v,y|\la) = G(x,y|\la) + \la\, G'(x,y|\la),
$$
etc. At this point, wee see that Proposition \ref{pro:derive} is valid in general.

If $h$ is a $\la$-harmonic function then by Theorem \ref{thm:general} there is a unique
distribution $\nu$ on $\partial T$ such that
$$
h(x) = h(x|\la) = \int_{\partial T_x} K(x,\xi|\la)\,d\nu(\xi)\,.
$$
and we can consider the functions 
$$
h^{(r)}(x) = h^{(r)}(x|\la) = \int_{\partial T_x} K^{(r)}(x,\xi|\la)\,d\nu(\xi)\,.
$$
From Proposition \ref{pro:derive} we infer that
$$
(\la \, I - P)^r h^{(r)} = r! \cdot h\,.
$$
So, since $h$ is $\la$-harmonic, the derived function 
$h^{(r)}$ is $\la$-polyharmonic of order $r+1$.
This leads us to the main theorem of this section.

\begin{thm}\label{thm:poly}
For $\la \in \res^*(P)$, every $\la$-polyharmonic function $f$ of order $n$
has an integral representation
$$
f(x) = \sum_{r=0}^{n-1} \int_{\partial T} K^{(r)}(x,\xi|\la)\,d\nu_r(\xi)\,,
$$
where the collection of distributions $(\nu_0\,,\dots, \nu_{n-1})$ in the sense
of \eqref{eq:measure} is uniquely determined by $f$.  
Conversely, every function which has an integral representation as above
is $\la$-polyharmonic of order $n$.
\end{thm}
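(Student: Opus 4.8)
The converse direction is already essentially done above: if $f(x) = \sum_{r=0}^{n-1}\int_{\partial T} K^{(r)}(x,\xi|\la)\,d\nu_r(\xi)$, then applying Proposition \ref{pro:derive} term by term shows $(\la I - P)$ maps the $r$-th summand to $r$ times the $(r-1)$-th (built from the same $\nu_r$), and the $r=0$ summand to $0$; iterating $n$ times kills everything, since each summand is annihilated after at most $r+1 \le n$ applications. So the real content is the forward direction and the uniqueness claim. I would prove the forward direction by induction on the order $n$. The base case $n=1$ is exactly Theorem \ref{thm:general}. For the inductive step, let $f$ be $\la$-polyharmonic of order $n$, and set $g = (\la I - P)f$, which is $\la$-polyharmonic of order $n-1$. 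By the inductive hypothesis, $g(x) = \sum_{r=0}^{n-2}\int_{\partial T} K^{(r)}(x,\xi|\la)\,d\mu_r(\xi)$ for a uniquely determined tuple $(\mu_0,\dots,\mu_{n-2})$.

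The heart of the argument is then to \emph{antidifferentiate}: I must find distributions $\nu_1,\dots,\nu_{n-1}$ so that $\tilde f(x) := \sum_{r=1}^{n-1}\int_{\partial T} K^{(r)}(x,\xi|\la)\,d\nu_r(\xi)$ satisfies $(\la I - P)\tilde f = g$. Using Proposition \ref{pro:derive}, $(\la I - P)\tilde f = \sum_{r=1}^{n-1} r\int_{\partial T} K^{(r-1)}(x,\xi|\la)\,d\nu_r(\xi) = \sum_{s=0}^{n-2}(s+1)\int_{\partial T} K^{(s)}(x,\xi|\la)\,d\nu_{s+1}(\xi)$. Matching this against the representation of $g$ forces $\nu_{s+1} = \tfrac{1}{s+1}\mu_s$ for $s = 0,\dots,n-2$ — here I use linearity of the integral in $\nu$ (established after \eqref{eq:int}) and, crucially, the \emph{uniqueness} part of Theorem \ref{thm:general} applied at the level of each coefficient, which is what guarantees the tuple $(\mu_s)$ determines the $(\nu_{s+1})$ and no ambiguity creeps in. With $\tilde f$ so defined, $(\la I - P)(f - \tilde f) = 0$, so $h := f - \tilde f$ is $\la$-harmonic; by Theorem \ref{thm:general}, $h(x) = \int_{\partial T} K(x,\xi|\la)\,d\nu_0(\xi)$ for a unique $\nu_0$. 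Then $f = h + \tilde f$ has the desired form with tuple $(\nu_0,\nu_1,\dots,\nu_{n-1})$.

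For uniqueness of the whole tuple, suppose $\sum_{r=0}^{n-1}\int_{\partial T} K^{(r)}(x,\xi|\la)\,d\nu_r(\xi) \equiv 0$. Apply $(\la I - P)^{n-1}$: by Proposition \ref{pro:derive} only the $r = n-1$ term survives, giving $(n-1)!\int_{\partial T} K(x,\xi|\la)\,d\nu_{n-1}(\xi) \equiv 0$, so $\nu_{n-1} = 0$ by Theorem \ref{thm:general}. Now peel off: with $\nu_{n-1} = 0$, apply $(\la I - P)^{n-2}$ to conclude $\nu_{n-2} = 0$, and descend to $\nu_0 = 0$. This simultaneously shows the representation map is injective on tuples, which combined with the existence construction gives the stated bijection.

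\textbf{Expected main obstacle.} The delicate point is legitimacy of exchanging $(\la I - P)$ with the boundary integral when $T$ is \emph{not} locally finite — but this is precisely what Proposition \ref{pro:derive} and Lemma \ref{lem:resolvent} are set up to handle: the integral is, by \eqref{eq:int'}, a \emph{finite} linear combination of the functions $K^{(r)}(x,x_i|\la)$, so no interchange of an infinite sum with a derivative is actually needed at the level of the representation itself; the only infinite sums are those defining $P$ at vertices of infinite degree, and their absolute convergence together with the interchange of derivative and that sum is guaranteed by Lemma \ref{lem:resolvent} (applied via \eqref{eq:PKe}). So in fact, once those lemmas are in hand, the argument is purely formal linear algebra over the sequence of distributions, and I would present it as such.
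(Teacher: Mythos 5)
Your proof is correct and follows essentially the same route as the paper: induction on $n$ with Proposition \ref{pro:derive} supplying the identity $(\la I-P)K^{(r)}=r\,K^{(r-1)}$; the only difference is that the paper peels off the top distribution $\nu_{n-1}$ first (via the $(n-1)$-st derivative of the harmonic function $\tfrac{1}{(n-1)!}(\la I-P)^{n-1}f$) and inducts on $f-h^{(n-1)}$, whereas you apply $(\la I-P)$ once and anti-differentiate the order-$(n-1)$ representation, recovering the harmonic part $\nu_0$ last. Your explicit uniqueness argument --- applying $(\la I-P)^{n-1}$ to a vanishing sum so that only the $r=n-1$ term survives, then descending --- is a welcome addition, since the paper leaves that step largely implicit in the induction.
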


\begin{proof} We use induction on $n$. The statement is true for $n=1$ by Theorem
 \ref{thm:general}. Now suppose it is true for $n-1$. Let $f$ be polyharmonic of
order $n$. Then we have the integral representation of the $\la$-harmonic
function
$$
h(x) = \frac{1}{(n-1)!}(\la\,I - P)^{n-1}f 
= \int_{\partial T}  K(x,\xi|\la)\,d\nu_{n-1}(\xi)
$$
for the uniquely determined distribution $\nu_{n-1}$ on $\partial T$. Deriving $n-1$ times
with respect to $\la$, Proposition \ref{pro:derive} yields
$$
(\la\, I - P)^{n-1} h^{(n-1)} = (n-1)!\, h = (\la\,I - P)^{n-1}f\,.
$$
Thus, the function $f - h^{(n-1)}$ is $\la$-polyharmonic of order $n-1$. By the induction hypothesis,
it has a unique integral representation
$$
f(x) - h^{(n-1)}(x) = \sum_{r=0}^{n-2} \int_{\partial T} K^{(r)}(x,\xi|\la)\,d\nu_r(\xi)\,.
$$
Since
$$
h^{(n-1)}(x) = \int_{\partial T}  K^{(n-1)}(x,\xi|\la)\,d\nu_{n-1}(\xi)\,,
$$
the result follows.
\end{proof}

\textbf{The isotropic case}

We now consider the specific case when $T = \T_q$ is the homogeneous tree with degree
$q+1$, and $P$ is simple random walk, i.e., $p(x,y)=1/(q+1)$ when $x \sim y$.
In this situation, \cite[Thm. 4.1]{CCGS} gives an integral representation
of polyharmonic functions for $\la=1$. By making use of Theorem \ref{thm:poly} above, 
we give a simpler proof of that result, and we also extend it
to general eigenvalues $\la$. First of all, it is well known
at least since \cite{Ca} (and has been computed again and again by many authors) 
that $\spec(P) = [-\rho\,,\,\rho]$, where $\rho = 2\sqrt{q}/(q+1)$.
Thus, by Theorem \ref{thm:nonzero}, $\res^*(P) = \res(P) = 
\C \setminus [-\rho\,,\,\rho]$.
For $\la \in \res(P)$ and neighbours $x,y \in T$, the function $F(\la) = F(x,y|\la)$ 
has also been re-computed many times, 
usually in the variable $z=1/\la$, 
on the basis of the quadratic equation which results from Lemma \ref{lem:crucial}(b). 
The expression of the $\la$-Martin kernel becomes
$$
K(x,\xi|\la) =  F(\la)^{\hor(x,\xi)}\,,\quad\text{where}\quad
F(\la) = F_-(\la) = \frac{(q+1)\la}{2q}\Bigl( 1 - \sqrt{1 - \rho^2/\la^2}\Bigr)\,,
$$
and $\hor(x,\xi) = d(x,x\wedge \xi) - d(o,x\wedge \xi)$ is the \emph{horocycle index} 
of $x$ with respect to $\xi$. We compute 
$$
K'(x,\xi|\la) = K(x,\xi|\la)\, \hor(x,\xi) \, f(\la)\,, \quad
\text{where} \quad f(\la) = -\frac{1}{\la \sqrt{1- \rho^2/\la^2}}\,. 
$$
From here, we get recursively
\begin{equation}\label{eq:rec}
\begin{aligned}
&K^{(r)}(x,\xi|\la) = K(x,\xi|\la)\, \sum_{k=1}^r \hor(x,\xi)^k \, f_{k,r}(\la)\,, \quad
\text{where}\\
&f_{1,r}(\la) = f^{(r-1)}(\la)\quad [\text{derivative of order}\; r-1]\,,
\quad f_{r,r}(\la) = f(\la)^r 
\,,\quad\text{and}\\
&f_{k,r}(\la) = f_{k,r-1}'(\la) + f(\la) f_{k-1,r-1}(\la)
\quad \text{for }\; r \ge 2\,,\; k=2,\dots,r-1\,.
\end{aligned}
\end{equation}
Now suppose that $f$ is a $\la$-polyharmonic function of order $n$ for our simple random walk
on the homogeneous tree. Let
$$
f(x) = \int_{\partial T} K(x,\xi|\la)\, d\nu_0(\xi) +  
\sum_{r=1}^{n-1} \int_{\partial T} K^{(r)}(x,\xi|\la)\,d\nu_r(\xi)
$$
be its unique integral representation according to Theorem \ref{thm:poly}.
By \eqref{eq:rec}, we can rewrite the last sum in terms of new 
distributions $\bar\nu_k$ as
$$
\sum_{k=1}^{n-1} \int_{\partial T} K(x,\xi|\la)\, \hor(x,\xi)^k\, d\bar\nu_k\,, 
\quad\text{where}\quad
\bar \nu_k = \sum_{r=k}^{n-1} f_{k,r}(\la)\, \nu_r\,.
$$
Since $f(\la) \ne 0$, the upper triangular $(n-1) \times (n-1)$-matrix 
$A_{n-1}(\la) = \bigl( f_{k,r}(\la) \bigr)_{1 \le k \le r \le n-1}$ is invertible. 
Thus, we can restate Theorem \ref{thm:poly} in this case as follows.

\begin{cor}\label{cor:poly}
For simple random walk on $T = \T_q$ and $\la \in \C \setminus [-\rho\,,\,\rho]$,
let $f$ be a $\la$-polyharmonic function of order $n$. Then $f$ has an 
integral representation
$$
f(x) = \sum_{k=0}^{n-1} \int_{\partial T} K(x,\xi|\la)\,\hor(x,\xi)^k\, d\bar\nu_k(\xi)\,,
$$
where the collection of distributions $(\bar \nu_0\,,\dots, \bar\nu_{n-1})$ in the sense
of \eqref{eq:measure} is uniquely determined by $f$.  
\end{cor}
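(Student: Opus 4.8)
The plan is to deduce this directly from Theorem~\ref{thm:poly} together with the explicit form of the $\la$-Martin kernel on $\T_q$, so that the only genuinely new content is the change of basis expressed by the recursion \eqref{eq:rec} and the fact that it is invertible. First I would record the elementary identities. Since $P$ is isotropic, Lemma~\ref{lem:crucial}(a) gives $F(x,y|\la)=F(\la)$ for all neighbours, hence $F(x,c|\la)=F(\la)^{d(x,c)}$, and so by \eqref{eq:Mkernel}, writing $c=x\wedge\xi$, one has $K(x,\xi|\la)=F(\la)^{\hor(x,\xi)}$; solving the quadratic equation of Lemma~\ref{lem:crucial}(b) identifies $F(\la)=F_-(\la)$. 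Differentiating $K(x,\xi|\la)=F(\la)^{\hor(x,\xi)}$ gives $K'(x,\xi|\la)=K(x,\xi|\la)\,\hor(x,\xi)\,f(\la)$ with $f(\la)=F'(\la)/F(\la)$, and a short computation identifies $f(\la)=-1/\bigl(\la\sqrt{1-\rho^2/\la^2}\bigr)$. Since $\la\in\C\setminus[-\rho,\rho]$ and $\rho>0$, we have $\la\ne 0$ and $f(\la)\ne 0$; this last observation is what makes the whole argument work.

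Next I would establish \eqref{eq:rec} by induction on $r$. Writing $K^{(r)}=K\cdot g_r$ with $g_r(\la)=\sum_{k=1}^{r}\hor(x,\xi)^k f_{k,r}(\la)$, the product rule together with $K'=K\hor f$ gives $K^{(r+1)}=K\bigl(\hor f\,g_r+g_r'\bigr)$, and comparing coefficients of $\hor^k$ yields $f_{k,r+1}=f_{k,r}'+f\,f_{k-1,r}$, with the boundary cases $f_{1,r+1}=f_{1,r}'$ (no term $f_{0,\,\cdot}$) and $f_{r+1,r+1}=f\,f_{r,r}$ (since $f_{r+1,r}=0$); these unwind to $f_{1,r}=f^{(r-1)}$ and $f_{r,r}=f(\la)^r$, which is \eqref{eq:rec}. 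Then, starting from the unique representation $f(x)=\sum_{r=0}^{n-1}\int_{\partial T}K^{(r)}(x,\xi|\la)\,d\nu_r(\xi)$ furnished by Theorem~\ref{thm:poly}, I substitute \eqref{eq:rec} and interchange the two finite sums; because $\T_q$ is locally finite, formula \eqref{eq:int'} shows each integral against $\nu_r$ is a finite linear combination of values $\nu_r(\partial T_{x_i})$, so every rearrangement is trivially legitimate. Collecting the coefficient of $K(x,\xi|\la)\hor(x,\xi)^k$ produces the asserted representation with $\bar\nu_0=\nu_0$ and $\bar\nu_k=\sum_{r=k}^{n-1}f_{k,r}(\la)\,\nu_r$ for $1\le k\le n-1$; each $\bar\nu_k$ again satisfies \eqref{eq:measure} since it is a finite scalar combination of distributions satisfying \eqref{eq:measure}.

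Finally, uniqueness and the converse are pure linear algebra. Evaluated on any fixed boundary arc $\partial T_x$, the tuple $(\bar\nu_1,\dots,\bar\nu_{n-1})$ is the image of $(\nu_1,\dots,\nu_{n-1})$ under the upper-triangular matrix $A_{n-1}(\la)=\bigl(f_{k,r}(\la)\bigr)_{1\le k\le r\le n-1}$, whose diagonal entries $f_{r,r}(\la)=f(\la)^r$ are nonzero; hence $A_{n-1}(\la)$ is invertible and $(\nu_r)\mapsto(\bar\nu_k)$ (with $\nu_0=\bar\nu_0$ left fixed) is a bijection between tuples of distributions. Uniqueness of $(\bar\nu_k)$ therefore follows from the uniqueness part of Theorem~\ref{thm:poly}, and conversely, given any tuple $(\bar\nu_k)$ of distributions, setting $(\nu_r)=A_{n-1}(\la)^{-1}(\bar\nu_k)$ yields distributions for which $f(x)=\sum_{r=0}^{n-1}\int_{\partial T}K^{(r)}(x,\xi|\la)\,d\nu_r(\xi)$, which is $\la$-polyharmonic of order $n$ by Theorem~\ref{thm:poly}. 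The only step that carries any weight is the bookkeeping in the induction proving \eqref{eq:rec} and the remark that $f(\la)\ne 0$ on $\C\setminus[-\rho,\rho]$; everything else is immediate because $\T_q$ is locally finite.
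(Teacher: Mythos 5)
Your proposal is correct and follows essentially the same route as the paper: the explicit form $K(x,\xi|\la)=F(\la)^{\hor(x,\xi)}$, the inductive recursion \eqref{eq:rec} for $K^{(r)}$, and the invertibility of the upper-triangular matrix $A_{n-1}(\la)$ with diagonal entries $f(\la)^r\ne 0$ applied to the representation from Theorem \ref{thm:poly}. Your identification $f(\la)=F'(\la)/F(\la)=-1/\bigl(\la\sqrt{1-\rho^2/\la^2}\bigr)$ checks out, and the converse/uniqueness argument via the bijection $(\nu_r)\mapsto(\bar\nu_k)$ is exactly what the paper intends.
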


(Note that $\bar \nu_0 = \nu_0\,$.) Uniqueness follows from invertibility 
of $A_{n-1}(\la)$. In the special case $\la =1$, this corollary is \cite[Thm. 4.1]{CCGS}.
Indeed, note that for simple random walk on $\T_q\,$, the $k_{\omega}(v)$ of \cite{CCGS} is
$-\hor(v,\omega)$ in our notation, where $v \in T$ and $\omega \in \partial T$.
Also, $F(1) = 1/q$, so that Corollary \ref{cor:poly} yields 
the representation of  \cite[Thm. 4.1]{CCGS}. 

Next, in 
forthcoming work we shall study the interplay of the boundary 
integral representation with the limiting behaviour of polyharmonic functions in
our general setting.

\section{Appendix: remarks on ``forward only'' transition operators}\label{sec:forward}

So far we have assumed $p(x,y)>0$ whenever $x\sim y$.
On a countable tree $T$ with root vertex $o$ as before, we now consider a different, simpler
type of random walks. To 
keep notation different, we denote the stochastic transition 
matrix by
$Q = \bigl( q(x,y) \bigr)_{x,y \in T}\,$, assuming that
$$
q(x,y) > 0 \iff x = y^-\,.
$$
The associated random walk $(Y_n)_{n \ge 0}$ starting at $o$ is such that $d(Y_n,o)=n$.
The general $n$-step-transition probabilities are 
\begin{equation}\label{eq:qn}
\begin{gathered}
q^{(n)}(x,y) = q(x_0\,,x_1)\,q(x_1\,,x_2)\cdots q(x_{n-1}\,,x_n)\,, \\
\text{if }\; y \in T_x \AND \pi(x,y) = [x=x_0\,,x_1\,,\dots, x_n=y]\,,
\end{gathered}
\end{equation}
while $q^{(n)}(x,y)=0$ in all other cases. Of course, we define $q^{(0)}(x,y) = \delta_x(y)$. 
It is clear that $(Y_n)$ converges almost surely to
a $\partial T$-valued random variable $Y_{\infty}\,$, whose distribution is the Borel measure
on $\partial T$ given by 
\begin{equation}\label{eq:bnu}
\boldsymbol{\nu}(\partial T_x) = q^{(n)}(o,x)\,,\quad \text{where }\; n = |x|\,.
\end{equation}
(Recall that $|x| = d(x,o)$.) In fact, this is precisely the situation defined in \eqref{rmk:sigma},
and we can recover the transition probabilities from $\boldsymbol{\nu}$ by
$$
q(x,y) = \boldsymbol{\nu}(\partial T_y)/\boldsymbol{\nu}(\partial T_x)\,,\quad \text{when }\; x = y^-\,.
$$
For $\la \in \C$, a $\la$-harmonic function $h$ is defined as before: $Qh = \la \cdot h$, or
equivalently, 
$$
\sum_{y\,:\, y^- = x} \boldsymbol{\nu}(\partial T_y) \, h(y) = \boldsymbol{\nu}(\partial T_x) \, h(x)
\quad \text{for every }\; x \in T\,.
$$
For $\la=1$, such functions are often called \emph{martingales,} which is completely justified.
Indeed, keeping in mind \eqref{rmk:sigma}, we can view the random variables $Y_n$ to be defined
on the probability space $\bigr( \partial T, \boldsymbol{\nu} \bigr)$ by $Y_n(\xi) = y_n\,,$
where $y_n$ is the $n^{\textrm{th}}$ vertex on the ray $\pi(o,\xi)$. The filtration 
$(\mathcal{F}_n)_{n \in \N}$ of the Borel 
$\sigma$-algebra of $\partial T$ induced by the stochastic process $(Y_n)$ is the one where
$\mathcal{F}_n$ is the sub-$\sigma$-algebra generated by the collection of atoms
$\{ \partial T_x : |x|=n \}$.  
Then the martingales with respect to this filtration are precisely the sequences $\bigl(h(Y_n)\bigr)$,
where $Qh = h$; compare with \cite{Dy} or with \cite[\S 7.C.I]{W-Markov}.
Analogously, $\la$-harmonic functions correspond to $\la$-martingales,
where 
$$
\Ex_{\boldsymbol{\nu}}[ h(Y_{n+1}) \mid \mathcal{F}_n ] = \la\, h(Y_n)\,.
$$
For $\la > 0$, the Martin boundary theory for non-negative $\la$-harmonic functions with respect to $Q$ 
works in the same way as for irreducible Markov chains, since every $x \in T$ can be reached from $o$
with positive probability. Indeed, in analogy with Theorem \ref{thm:general}, it
works for all $\la \ne 0$.

In our situation, the random walk starting at $x \in T$ can visit a vertex $y$ at most once, namely
at time $n=d(x,y)$ and when $x$ lies on $\pi(o,y)$. Therefore, 
$$
G(x,y|\la) = F(x,y|\la)/\la = q^{(n)}(x,y)/\la^{n+1}\,,\quad \text{where }\; n = d(x,y)\,. 
$$
which is non-zero if and only if  and only if $y \in T_x\,$. We can define the $\la$-Martin kernel
as in \eqref{eq:Mkernel}, and \eqref{eq:qn} yields
\begin{equation}\label{eq_MartQ}
K(x,\xi|\la) = K_Q(x,\xi|\la) = 
            \begin{cases} \dfrac{\la^{|x|}}{q^{(|x|)}(o,x)}\,,&\text{if }\; x \in \pi(o,\xi)\,,\\[3pt]
                          0\,,&\text{otherwise.}
            \end{cases}
\end{equation}
It is straightforward that the locally constant function $x \mapsto K_Q(x,\xi|\la)$ is $\la$-harmonic.
In order to make a notational difference from the ``bidirectional'' case, we write 
$\sigma$ instead of $\nu$ for a finitely additive complex distribution on $\partial T$ as in 
\eqref{eq:measure}. Then 
\begin{equation}\label{eq:poissonQ}
h(x) = \int_{\partial T} K_Q(x,\xi|\la)\, d\sigma(\xi) = \frac{\la^{|x|}}{q^{(|x|)}(o,x)}
\, \sigma(\partial T_x)\,.
\end{equation}
is $\la$-harmonic for $Q$. Conversely, if $h$ is $\la$-harmonic, the $\sigma$ can be recovered from
$h$ by \eqref{eq:poissonQ} and a distribution in our sense. We get the following.

\begin{lem}\label{lem:forward}
For $\la \ne 0$, equation \eqref{eq:poissonQ} provides a one-to-one correspondence 
between $\la$-harmonic functions for $Q$ and complex distributions on $\partial T$. 
\end{lem}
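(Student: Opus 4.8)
The plan is to verify the two directions of the claimed bijection directly from formula \eqref{eq:poissonQ}, using the very explicit nature of $K_Q$ in the forward-only case. First I would check that if $\sigma$ is a complex distribution on $\partial T$ in the sense of \eqref{eq:measure}, then the function $h$ defined by \eqref{eq:poissonQ} is indeed $\la$-harmonic for $Q$. Since $K_Q(x,\xi|\la)$ is locally constant in $\xi$ and supported on $\pi(o,\xi)$, the integral against $\sigma$ collapses to the single term $\frac{\la^{|x|}}{q^{(|x|)}(o,x)}\,\sigma(\partial T_x)$, so $h(x)$ is genuinely well-defined (no convergence issue beyond the absolute convergence already built into \eqref{eq:measure} when $\deg(x)=\infty$). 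Then one computes, for $x$ with $y^-=x$, that
$$
\sum_{y:\,y^-=x} q(x,y)\,h(y)
= \sum_{y:\,y^-=x} q(x,y)\,\frac{\la^{|x|+1}}{q^{(|x|+1)}(o,y)}\,\sigma(\partial T_y),
$$
and since $q^{(|x|+1)}(o,y) = q^{(|x|)}(o,x)\,q(x,y)$ by \eqref{eq:qn}, the $q(x,y)$ cancels and the right-hand side equals $\frac{\la^{|x|+1}}{q^{(|x|)}(o,x)}\sum_{y:\,y^-=x}\sigma(\partial T_y) = \frac{\la^{|x|+1}}{q^{(|x|)}(o,x)}\sigma(\partial T_x) = \la\,h(x)$, using the distribution identity \eqref{eq:measure}. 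That is exactly $Qh = \la\cdot h$.

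For the converse, given a $\la$-harmonic $h$ with $\la \ne 0$, I would define $\sigma$ on $\mathcal{F}_o$ by solving \eqref{eq:poissonQ} for $\sigma(\partial T_x)$, namely $\sigma(\partial T_x) := \frac{q^{(|x|)}(o,x)}{\la^{|x|}}\,h(x)$ (this is where $\la\ne 0$ is needed so the powers of $\la$ are invertible). One must check that this $\sigma$ satisfies \eqref{eq:measure}: running the computation of the previous paragraph in reverse, the identity $\sum_{y:\,y^-=x}\sigma(\partial T_y) = \sigma(\partial T_x)$ is equivalent, after multiplying through by $\la^{|x|+1}/q^{(|x|)}(o,x)$ and using $q^{(|x|+1)}(o,y)=q^{(|x|)}(o,x)q(x,y)$, to $\sum_{y:\,y^-=x} q(x,y)h(y) = \la\,h(x)$, which is precisely $Qh=\la\cdot h$ at the vertex $x$; absolute convergence of the sum when $\deg(x)=\infty$ is part of the standing assumption that $Qh$ is defined by an absolutely convergent series. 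Finally, uniqueness is immediate: \eqref{eq:poissonQ} forces $\sigma(\partial T_x)$ to have the stated value, so $h$ determines $\sigma$ on all of $\mathcal{F}_o$, and conversely $\sigma$ determines $h$ by the same formula.

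There is essentially no obstacle here — this is the ``much simpler'' analogue advertised in the introduction, and the whole statement is a bookkeeping exercise once one observes that $K_Q$ is a locally constant function collapsing the integral to one term and that the cocycle relation $q^{(n+1)}(o,y)=q^{(n)}(o,x)q(x,y)$ makes the harmonicity condition for $h$ and the additivity condition \eqref{eq:measure} for $\sigma$ literally the same equation up to the invertible factor $\la^{|x|}/q^{(|x|)}(o,x)$. The only point requiring a word of care is the hypothesis $\la\ne 0$, which enters solely to make that factor invertible in the direction $h\mapsto\sigma$; for $\la=0$ the map $\sigma\mapsto h$ still makes sense but is far from injective. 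I would present the proof in two short paragraphs mirroring the two directions, each a two-line computation, followed by the one-line uniqueness remark.
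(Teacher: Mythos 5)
Your proof is correct and follows essentially the same route as the paper, which only sketches the argument in the paragraph preceding the lemma: the locally constant kernel $K_Q(\cdot,\xi|\la)$ collapses the integral to the single term $\la^{|x|}\sigma(\partial T_x)/q^{(|x|)}(o,x)$, and the cocycle identity $q^{(|x|+1)}(o,y)=q^{(|x|)}(o,x)\,q(x,y)$ turns the harmonicity equation for $h$ into the additivity condition \eqref{eq:measure} for $\sigma$ and vice versa, with $\la\ne 0$ needed only to invert the prefactor. Nothing is missing.
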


As in \S \ref{sec:poly}, $\la$-polyharmonic functions of order $n$ for $Q$ 
are those which are annihilated by
$(\la\,I - Q)^n$. We can proceed exactly as in \S \ref{sec:poly}; the analogue of
Proposition \ref{pro:derive} remains valid, and we have for the $r^{\textrm{th}}$ derivative
with respect to $\la$
\begin{equation}\label{eq:KQ}
K_Q^{(r)}(x,\xi|\la) = |x|(|x|-1) \cdots (|x|-r+1)\, K_Q(x,\xi|\la)
\end{equation}
Now the following is obtained in exactly the same way as Theorem \ref{thm:poly}.

\begin{pro}\label{pro:polyQ} For $\la \in \C \setminus \{0\}$, every $\la$-polyharmonic function 
$f$ of order $n$ for $Q$ has an integral representation
$$
f(x) = \sum_{r=0}^{n-1} \int_{\partial T} K_Q^{(r)}(x,\xi|\la)\,d\sigma_r(\xi)\,,
$$
where the collection of distributions $(\sigma_0\,,\dots, \sigma_{n-1})$ in the sense
of \eqref{eq:measure} is uniquely determined by $f$.  
Conversely, every function which has an integral representation as above
is $\la$-polyharmonic of order $n$ for $Q$.
\end{pro}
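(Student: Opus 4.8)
The plan is to mirror the proof of Theorem \ref{thm:poly} verbatim, using the ``forward only'' ingredients already assembled. The base case $n=1$ is Lemma \ref{lem:forward}. For the inductive step, assume the statement for order $n-1$ and let $f$ be $\la$-polyharmonic of order $n$ for $Q$. Then $h := \frac{1}{(n-1)!}(\la\,I - Q)^{n-1} f$ is $\la$-harmonic for $Q$, hence by Lemma \ref{lem:forward} there is a unique distribution $\sigma_{n-1}$ with $h(x) = \int_{\partial T} K_Q(x,\xi|\la)\,d\sigma_{n-1}(\xi)$. The function $h^{(n-1)}(x) = \int_{\partial T} K_Q^{(n-1)}(x,\xi|\la)\,d\sigma_{n-1}(\xi)$ then satisfies $(\la\,I - Q)^{n-1} h^{(n-1)} = (n-1)!\,h = (\la\,I - Q)^{n-1} f$, so $f - h^{(n-1)}$ is $\la$-polyharmonic of order $n-1$; apply the induction hypothesis to obtain distributions $\sigma_0\,,\dots,\sigma_{n-2}$ representing it, and combine.

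The two facts to check before the induction runs are the analogue of Proposition \ref{pro:derive} and the identity \eqref{eq:KQ}, both of which are flagged as ``valid in exactly the same way'' in the text preceding the statement. For \eqref{eq:KQ}: from \eqref{eq_MartQ}, $K_Q(x,\xi|\la) = \la^{|x|}/q^{(|x|)}(o,x)$ on the support and $0$ off it, so $\la \mapsto K_Q(x,\xi|\la)$ is a monomial of degree $|x|$ in $\la$, whence its $r$-th derivative is $|x|(|x|-1)\cdots(|x|-r+1)\,\la^{|x|-r}/q^{(|x|)}(o,x) = |x|(|x|-1)\cdots(|x|-r+1)\,K_Q(x,\xi|\la)$ when $\la \ne 0$. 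For the operator identity: since $K_Q(\cdot,\xi|\la)$ is locally constant (finitely supported on any branch below a fixed level), the sum defining $Q\,K_Q^{(r)}(\cdot,\xi|\la)$ is finite, so differentiation under the sum is trivial, and one differentiates $Q\,K_Q(\cdot,\xi|\la) = \la\, K_Q(\cdot,\xi|\la)$ $r$ times by Leibniz to get $(\la\,I - Q)K_Q^{(r)}(\cdot,\xi|\la) = r\,K_Q^{(r-1)}(\cdot,\xi|\la)$, and then $(\la\,I-Q)^r h^{(r)} = r!\,h$ follows by iteration using linearity of the integral in \eqref{eq:int}. There is no non-local-finiteness subtlety here at all, since every such kernel vanishes outside a cone and is locally constant.

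The converse direction is immediate: any $f$ of the displayed form is a finite linear combination of functions $h^{(r)}$ attached to $\la$-harmonic $h$'s, and $(\la\,I-Q)^r h^{(r)} = r!\,h$ with $(\la\,I-Q)h = 0$ gives $(\la\,I-Q)^{r+1} h^{(r)} = 0$, so each summand with index $r \le n-1$ is killed by $(\la\,I-Q)^n$. Uniqueness of the $\sigma_r$ propagates through the induction exactly as in Theorem \ref{thm:poly}: $\sigma_{n-1}$ is pinned down by $h$ via Lemma \ref{lem:forward}, and the remaining $\sigma_0\,,\dots,\sigma_{n-2}$ are pinned down by $f - h^{(n-1)}$ via the induction hypothesis. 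I do not expect any genuine obstacle; the only thing to be careful about is that $\la \ne 0$ is used both in Lemma \ref{lem:forward} and (harmlessly) in writing $K_Q^{(r)}$, which is why the exceptional eigenvalue remains $\la = 0$ and no other.
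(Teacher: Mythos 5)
Your proof is correct and follows exactly the route the paper takes: the paper's own justification of Proposition (\ref{pro:polyQ}) is literally that it ``is obtained in exactly the same way as Theorem \ref{thm:poly}'', i.e.\ induction on $n$ with base case Lemma \ref{lem:forward}, the derivative identity for $K_Q^{(r)}$ from \eqref{eq:KQ}, and the observation that no local-finiteness issues arise because $K_Q(\cdot,\xi|\la)$ is supported on the ray $\pi(o,\xi)$. Your verification of \eqref{eq:KQ} and of the commutation of $Q$ with $d/d\la$ supplies exactly the details the paper leaves implicit, so there is nothing to add.
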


For real $t$, expand the polynomial 
$$
t(t-1)\cdots(t-r+1) = \sum_{k=1}^r a_{k,r}\, t^k   
$$
By \eqref{eq:KQ}, we can re-order the integral terms  of 
Proposition \ref{pro:polyQ}:
$$
\sum_{r=0}^{n-1}  K_Q^{(r)}(x,\xi|\la)\,d\sigma_r(\xi)
=  K_Q(x,\xi|\la)\,d\sigma_0(\xi) + 
\sum_{k=1}^{n-1} \sum_{r=k}^{n-1} |x|^k \,a_{k,r}\,  K_Q(x,\xi|\la)\,d\sigma_r(\xi)\,.
$$
Setting $\bar \sigma_0=\sigma_0$ and $\bar \sigma_k = \sum_{r=k}^{n-1}a_{k,r}\,\sigma_r$, 
we get the modified integral  representation
$$
f(x) = \sum_{k=0}^{n-1} |x|^k\, \int_{\partial T} K(x,\xi|\la)\, d\bar\sigma_k(\xi)\,,
$$
and the distributions $\bar\sigma_0\,,\dots, \bar\sigma_{n-1}$ are uniquely determined by $f$,
because the upper diagonal matrix $\bigl( a_{k,r}\bigr)_{1 \le j \le r \le n}$
is invertible. 

\begin{cor}\label{cor:polyQ}
For the forward transition operator $Q$ on $T$ and $\la \in \C \setminus \{0\}$
let $f$ be a $\la$-polyharmonic function of order $n$. Then $f$ has a representation
$$
f(x) = \sum_{k=0}^{n-1} |x|^k\, h_k(x)\,
$$
where the collection of $\la$-harmonic functions $(h_0\,,\dots, h_{n-1})$ for the
forward operator $Q$ is uniquely determined by $f$.  
\end{cor}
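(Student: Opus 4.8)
The plan is to read the corollary off Proposition \ref{pro:polyQ} by the purely algebraic rearrangement already sketched in the paragraph preceding the statement; no new analytic input is needed. First I would start from the unique integral representation
$$
f(x) = \sum_{r=0}^{n-1} \int_{\partial T} K_Q^{(r)}(x,\xi|\la)\,d\sigma_r(\xi)
$$
furnished by Proposition \ref{pro:polyQ}, and insert the closed form \eqref{eq:KQ}, i.e. $K_Q^{(r)}(x,\xi|\la) = |x|(|x|-1)\cdots(|x|-r+1)\,K_Q(x,\xi|\la)$, where the empty product is $1$ when $r=0$. Expanding the falling factorial as the polynomial $t(t-1)\cdots(t-r+1) = \sum_{k=1}^r a_{k,r}\,t^k$ and evaluating at $t=|x|$ turns each integrand into $\sum_{k} a_{k,r}\,|x|^k\,K_Q(x,\xi|\la)$.

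Next I would interchange the two finite summations (over $r$ and over $k$), which is of course legitimate, and collect the coefficient of $|x|^k$. Putting $\bar\sigma_0=\sigma_0$ and $\bar\sigma_k = \sum_{r=k}^{n-1} a_{k,r}\,\sigma_r$ for $1\le k\le n-1$, and noting that a finite linear combination of complex distributions on $\mathcal{F}_o$ is again one of the same kind since the defining relation \eqref{eq:measure} is linear, I obtain
$$
f(x) = \sum_{k=0}^{n-1} |x|^k \int_{\partial T} K_Q(x,\xi|\la)\,d\bar\sigma_k(\xi)\,.
$$
By Lemma \ref{lem:forward} (equation \eqref{eq:poissonQ}), the function $h_k(x) = \int_{\partial T} K_Q(x,\xi|\la)\,d\bar\sigma_k(\xi)$ is $\la$-harmonic for $Q$, so $f(x) = \sum_{k=0}^{n-1} |x|^k h_k(x)$ as claimed; the converse direction is just this computation read backwards, together with the converse half of Proposition \ref{pro:polyQ}.

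For uniqueness I would observe that the passage $(\sigma_0,\dots,\sigma_{n-1}) \mapsto (\bar\sigma_0,\dots,\bar\sigma_{n-1})$ is governed by the upper triangular matrix $(a_{k,r})_{1\le k\le r\le n-1}$, whose diagonal entries are all $1$ (the leading coefficient of $t(t-1)\cdots(t-r+1)$), hence invertible; thus the tuple $(\bar\sigma_k)$ determines and is determined by $(\sigma_r)$. Since the latter is uniquely determined by $f$ by Proposition \ref{pro:polyQ}, so is $(\bar\sigma_k)$, and by Lemma \ref{lem:forward} each $h_k$ is uniquely determined by $\bar\sigma_k$. The only thing that requires care is the index bookkeeping — which $r$ contribute to a given $k$ (namely $r\ge k$) and the $r=0$ convention — but this is elementary, and I do not anticipate any genuine obstacle, since all sums are finite and we stay within the finite-dimensional span of the locally constant kernels $K_Q(\cdot,\xi|\la)$ that occur in the representation.
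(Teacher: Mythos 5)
Your proposal is correct and follows essentially the same route as the paper: expand the falling factorial $|x|(|x|-1)\cdots(|x|-r+1)=\sum_k a_{k,r}|x|^k$ in the representation of Proposition \ref{pro:polyQ} via \eqref{eq:KQ}, regroup to define $\bar\sigma_k=\sum_{r\ge k}a_{k,r}\sigma_r$, and deduce uniqueness from the invertibility of the upper triangular matrix $(a_{k,r})$ together with Lemma \ref{lem:forward}. No gaps.
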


This is our analogue of Corollary \ref{cor:poly} 
for arbitrary ``forward only'' transition operators on trees. 
It generalises \cite[Thm. 5.1]{CCGS} to all forward transition operators on countable
trees and and all eigenvalues different from $0$.

\end{document}